\documentclass[11pt,letterpaper]{article}
\usepackage[margin=1in]{geometry}
\usepackage[utf8]{inputenc}
\usepackage[dvipsnames]{xcolor}
\usepackage[bookmarksnumbered,linktocpage,hypertexnames=false,colorlinks=true,linkcolor=NavyBlue,urlcolor=NavyBlue,citecolor=ForestGreen,anchorcolor=green,breaklinks=true,pagebackref=true,pdfusetitle]{hyperref}
\usepackage{bbm,braket,microtype,amsmath,amssymb,amsthm,amsfonts,dsfont,dynkin-diagrams,mathtools,colortbl,booktabs,algorithm,algpseudocode,graphicx,enumitem,xspace,float,mathdots,ellipsis,caption,subcaption,mleftright,multirow,adjustbox}
\usepackage[normalem]{ulem}
\usepackage[T1]{fontenc}
\usepackage{textcomp}
\usepackage[english]{babel}
\usepackage[capitalize,nameinlink]{cleveref}
\usepackage{tikz}
\usepackage{stmaryrd}
\usepackage{listings}
\usepackage{thm-restate}
\lstdefinestyle{sagestyle}{
    language=Python,
    basicstyle=\ttfamily\small,
    keywordstyle=\color{blue},
    commentstyle=\color{gray},
    stringstyle=\color{teal},
    showstringspaces=false,
    breaklines=true,
    frame=single,
    columns=fullflexible,
    keepspaces=true
}

\usepackage[numbers]{natbib}

\allowdisplaybreaks[4]
\newtheorem{theorem}{Theorem}[section]
\newtheorem{corollary}[theorem]{Corollary}\AddToHook{env/corollary/begin}{\crefalias{theorem}{corollary}}
\AddToHook{env/definition/begin}{\crefalias{theorem}{definition}}
\newtheorem{lemma}[theorem]{Lemma}\AddToHook{env/lemma/begin}{\crefalias{theorem}{lemma}}
\crefname{lemma}{Lemma}{Lemmata}
\AddToHook{env/proposition/begin}{\crefalias{theorem}{proposition}}\crefname{proposition}{Proposition}{Propositions}
\AddToHook{env/problem/begin}{\crefalias{theorem}{problem}}
\AddToHook{env/assumption/begin}{\crefalias{theorem}{assumption}}
\AddToHook{env/claim/begin}{\crefalias{theorem}{claim}}
\AddToHook{env/conjecture/begin}{\crefalias{theorem}{conjecture}}
\newtheorem{remark}[theorem]{Remark}\AddToHook{env/remark/begin}{\crefalias{theorem}{remark}}
\newtheorem{example}[theorem]{Example}\AddToHook{env/example/begin}{\crefalias{theorem}{example}}
\AddToHook{env/question/begin}{\crefalias{theorem}{question}}
\newtheorem*{theorem*}{Theorem}
\newtheorem*{corollary*}{Corollary}
\newtheorem*{proposition*}{Proposition}
\AddToHook{cmd/appendix/after}{\crefalias{section}{appendix}}
\numberwithin{equation}{section}

\crefname{lstlisting}{listing}{listings}
\Crefname{lstlisting}{Listing}{Listings}

\usepackage{aliascnt}
\makeatletter
\let\c@algorithm\relax 
\makeatother
\newaliascnt{algorithm}{theorem}

\makeatletter
\let\c@table\relax 
\makeatother
\newaliascnt{table}{theorem}

\newcommand{\PP}{\mathbb{P}}

\newcommand{\CC}{\mathbb{C}}
\newcommand{\ZZ}{\mathbb{Z}}

\newcommand{\cR}{\mathcal{R}}

\newcommand{\cZ}{\mathcal{Z}}

\newcommand{\cI}{\mathcal{I}}

\newcommand{\ff}{\ensuremath{\boldsymbol{f}}}
\newcommand{\hh}{\ensuremath{\boldsymbol{h}}}

\newcommand{\ft}{\mathfrak{t}}
\newcommand{\fp}{\mathfrak{p}}

\DeclareMathOperator{\length}{length}

\DeclareMathOperator{\codim}{codim}
\DeclareMathOperator{\rk}{rk}

\DeclareMathOperator{\Proj}{Proj}
\DeclareMathOperator{\gm}{gm}
\DeclareMathOperator{\am}{am}
\DeclareMathOperator{\sm}{sm}
\DeclareMathOperator{\Res}{Res}
\DeclareMathOperator{\T}{T}
\DeclareMathOperator{\mult}{mult}
\DeclareMathOperator{\red}{red}

\title{Resultant multiplicity via projective degrees\\ and applications to tensor eigenvalues}
\author{
M. Levent Do\u{g}an 
\thanks{Ludwig-Maximilians-Universit\"{a}t M\"{u}nchen, Germany. \url{mahmut.dogan@lmu.de} }
\and 
Elias Tsigaridas 
\thanks{Inria Paris and Sorbonne University, France.
\url{elias.tsigaridas@inria.fr}
}
\and 
Zafeirakis Zafeirakopoulos \thanks{Department of Mathematics, National and Kapodistrian University of Athens, Greece. \url{zzafeir@math.uoa.gr}}}
\date{\small\today}

\begin{document}

\maketitle

\begin{abstract}
Given a system $\ff=(f_1,\ldots,f_n)$ of $n$ homogeneous forms in $n$ variables of the same degree, Macaulay's resultant vanishes precisely when the polynomials have a common projective zero.
Its order of vanishing measures the singularity of the resultant hypersurface at $\ff$.
In this paper, we study how this multiplicity reflects the geometry of the projective zero scheme defined by $\ff$.

We give an exact formula for the multiplicity, expressed in terms of the projective degrees of the rational map defined by $\ff$.
As a consequence, we obtain a geometric lower bound involving the degrees, dimensions, and multiplicities of the irreducible components of the projective zero scheme.
This extends the multiplicity estimates of Roy and Ghidelli from zero-dimensional schemes to schemes of arbitrary dimension.

Finally, we apply this geometric estimate to tensor eigenvalues.
It translates directly into a lower bound for the algebraic multiplicity of a tensor eigenvalue in terms of the geometry of its eigenscheme.
This settles a conjecture by Canino et al. and consequently settles earlier conjectures of Qi and of Hu and Ye concerning the relationship between algebraic, geometric, and span multiplicities of tensor eigenvalues. 
\end{abstract}

\tableofcontents

\section{Introduction}

Let $n\geq 2, d\geq 1$, and let \[
R\coloneqq \CC[x_1,\ldots,x_n],\qquad \ff=(f_1,\dots,f_n)\in R_d^n,
\] 
where $R_{d}$ denotes the vector space of homogeneous polynomials of degree $d$.
The Macaulay resultant \cite{Macaulay-02,Macaulay-16} is an irreducible polynomial in the coefficients of $\ff$, uniquely characterized up to a constant (which is irrelevant for our discussion) by \[
\Res(\ff)=0 \quad\text{if and only if}\quad f_1,\ldots, f_n \,\text{ have a common zero in }\PP^{n-1}.
\] That is, $\Res$ characterizes precisely the homogeneous square systems with a common projective zero.
We refer to \cite{Jouanolou-91,Cox-Little-Oshea-05,GKZ-08} for the construction and basic properties of the resultant.

The resultant defines a hypersurface in the affine space \[
\cR \coloneqq \{\ff\in R_d^n\mid \Res(\ff)=0\}\subset R_d^n
\] which we call the \emph{resultant hypersurface}.
A general point of this hypersurface is a system with exactly one simple projective zero.
Indeed, the incidence variety $\mathcal{I}=\{(\ff,[p])\mid \ff(p)=0\}$ is irreducible and maps generically one-to-one onto $\cR$; the locus of systems with two distinct zeros or a nonreduced zero has codimension at least two in $R_d^n$.

In fact, a closer look at $\mathcal{I}$ and its projection onto $\cR$ gives the following elegant characterization of the smooth points of the resultant hypersurface: \[
\ff\text{ is a smooth point of }\cR\quad\text{if and only if}\quad f_1,\dots,f_n\text{ have a unique, simple zero in }\PP^{n-1},
\] so systems with multiple or non-reduced zeros lie in the singular locus of the resultant hypersurface, see \cite[Proposition~4.2]{GGTV-23}.
For binary forms, this also follows from \cite[Chapter~3, Corollary~3.7]{GKZ-08}, which additionally provides a formula for the coordinates of the unique zero in terms of the partial derivatives of the resultant.

This observation leads to our central question:
\emph{how much of the geometry of the projective zero scheme $X\subset\PP^{n-1}$ of $\ff$ is described by the order of vanishing of $\Res$ at $\ff$}?
We write \[
\mu(\ff) \coloneqq \mult_{\ff}(\Res)
\] for the multiplicity of the resultant at $\ff$.
That is, $\mu(\ff)$ is the least total degree of a non-zero term in the Taylor expansion of $\Res(\ff+\hh)$ in the coefficients of $\hh$.
From the previous discussion it follows that $\mu(\ff)=0$ if and only if $X$ is the empty scheme, and $\mu(\ff)=1$ if and only if $X$ is a simple point.

We begin with two elementary examples illustrating how $\mu(\ff)$ reflects the geometry of $X$.

\begin{example}[Linear systems]
\label{ex:linear}
Let \[
f_\ell=c_{\ell,1}x_1+\dots+c_{\ell,n}x_n,\qquad \ell=1,\ldots,n,
\] be a system of $n$ linear forms in $n$ variables, and let $C=[c_{i,j}]_{i,j\in [n]}$ be its coefficient matrix.
Then, $\Res(\ff)=\det C$.
If $\rk C=r$, then $X=\PP(\ker C)\cong\PP^{n-r-1}$, while \[
\mu(\ff) = n-r = \dim X + 1,
\] since order $n-r-1$ derivatives of $\det$ either identically vanish, or they are minors of size $r+1$ (up to a sign), which all vanish on a matrix of rank $r$.
Hence, in this example, the multiplicity detects the dimension of the projective zero scheme.
\end{example}

\begin{example}[Binary forms]
\label{ex:bivariate}
Suppose $n=2$, and $
f_1, f_2$ are two binary forms of degree $d$.
In this case, $\Res(f_1,f_2)$ is the determinant of the \emph{Sylvester matrix}, whose entries are the coefficients of $f_1,f_2$, see \cite[Chapter~3,\S 1]{Cox-Little-Oshea-05}.
By \cite[Theorem~1]{Barnett-80}, the corank of the Sylvester matrix equals $N\coloneqq \deg\gcd(f_1,f_2)$.
Hence, the partial derivatives of the resultant all vanish up to order $N$.
Consequently, for binary forms we have \[
\mu(\ff)\geq \deg\gcd(f_1,f_2).
\]
Thus, the multiplicity of the resultant at a system of binary forms measures the degree of its projective zero scheme.
\end{example}

When the projective zero scheme is zero-dimensional, the results of Roy \cite{Roy-13} and Ghidelli \cite{Ghidelli-19} give the first general relation between its geometry and resultant multiplicity.
\begin{theorem}[{[\citealp[Theorem~5.2]{Roy-13};
 \citealp[Theorem~0.1]{Ghidelli-19}]}]
\label{thm:roy-ghidelli}
    Let $\ff=(f_1,f_2,\dots,f_n)$ be a system of $n$ homogeneous forms in $n$ variables of degree $d$ whose projective zero scheme has degree $N$.
    Then, Macaulay's resultant vanishes at $\ff$ with multiplicity at least $N$:\begin{equation}
    \label{eq:roy-ghidelli}
    \mu(\ff)\geq N \quad \text{if} \quad f_1,\dots,f_n \text{ have }N\text{ projective zeros, counted with multiplicity.}
    \end{equation}
\end{theorem}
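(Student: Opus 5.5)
The plan is to compute $\mu(\ff)$ by restricting $\Res$ to a general line through $\ff$ and reading off the order of vanishing there. Fix a general system $\boldsymbol{g}\in R_d^n$ and set $\phi(t)\coloneqq\Res(\ff+t\boldsymbol{g})\in\CC[t]$. For general $\boldsymbol{g}$, the order of vanishing of $\phi$ at $t=0$ equals $\mult_{\ff}(\Res)=\mu(\ff)$, since the lowest-degree homogeneous part of the Taylor expansion does not vanish at a general direction. In fact it is enough to prove $\operatorname{ord}_{t=0}\phi\geq N$ for \emph{every} $\boldsymbol{g}$. For the lower bound it also suffices to choose the direction $\boldsymbol{g}$ among general systems with $\Res(\boldsymbol{g})\neq 0$, so that $\phi\not\equiv 0$.

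To compute $\operatorname{ord}_{t=0}\phi$, use the Poisson-type product formula for the resultant. Consider the family of zero schemes $Z_t=V(\ff+t\boldsymbol{g})\subset\PP^{n-1}$ over $\operatorname{Spec}\CC[t]_{(t)}$. The cleanest route is the well-known identity
\[
\operatorname{ord}_{t=0}\Res(\ff+t\boldsymbol{g})=\length_{\CC[[t]]}\bigl(\text{torsion of } H^0\text{ of the Koszul/Macaulay complex in high degree}\bigr),
\]
which I would organize as follows. Set $A=\CC[[t]][x_1,\ldots,x_n]$ and let $F=\ff+t\boldsymbol{g}$. Over the generic point $t\neq 0$ the $F_i$ form a regular sequence with no projective zeros, so $(A/F)_m$ is $t$-torsion for $m\gg0$. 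The Macaulay resultant is the determinant of the Koszul complex in a large degree $m$; indeed Jouanolou and Gelfand--Kapranov--Zelevinsky realize $\Res$ as the determinant of the generically exact complex $K_\bullet(F)_m$. It is a standard fact about determinants of generically exact complexes over a DVR that the $t$-adic valuation of the determinant equals $\sum_i(-1)^i\length H_i(K_\bullet(F)_m)$.

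The key step is then to bound this alternating sum below by $\length_{\CC[[t]]}(A/F)_m$ and to identify that length as at least $N$. For the first part, the higher Koszul homology $H_i$ with $i\geq1$ is annihilated by the ideal and is supported over $t=0$. Here I would exploit that, because $F$ is a regular sequence after inverting $t$, the modules $H_i$ ($i\geq1$) vanish in large degree when $X$ is zero-dimensional. This holds since then $\ff$ is a regular sequence locally on $\PP^{n-1}$ near the finitely many points. Hence the valuation equals $\length_{\CC[[t]]}(A/F)_m$. For the second part, $(A/F)_m\otimes\CC[[t]]/(t)=(R/\ff)_m$, which has dimension equal to the Hilbert polynomial value $N$ for $m\gg0$. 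Since $(A/F)_m$ is a finite torsion $\CC[[t]]$-module whose reduction mod $t$ has dimension $N$, it needs at least $N$ generators. Any such module has length at least its number of generators, so the length is at least $N$.

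I expect the main obstacle to be the determinant--homology identity together with the vanishing of higher Koszul homology in large degrees. The subtle point is that $\ff$ itself need not be a regular sequence globally (the origin is a zero), so saturation issues arise. I would handle this by working with the sheafified Koszul complex on $\PP^{n-1}_{\CC[[t]]}$ twisted by $\mathcal{O}(m)$ for $m\gg0$ and using Serre vanishing. Locally at each point of $X$, the system $\ff$ generates an ideal of codimension $n-1$ with $n$ generators, so one relation is redundant locally. This is where the Koszul homology $H_1$ may actually be nonzero, and it is the step to check carefully. If $H_1\neq0$, I would instead work locally: dehomogenize at each point $p$ of $X$, pick $n-1$ general combinations of the $f_i$ forming a local regular sequence, and use the multiplicativity $\Res=\prod_{p}(\text{local factor})$ from the Poisson formula. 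This reduces the claim to the one-point statement that the local intersection length along the deformation is at least $\mult_p X$, which follows from upper semicontinuity of fibre length in a finite flat family.
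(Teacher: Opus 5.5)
Your route works and is genuinely different from the paper's. However, the justification you give for its key step is wrong, and the worry you raise about it is misplaced.

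\textbf{The key step.} You claim the higher Koszul homology vanishes in large degree ``since $\ff$ is a regular sequence locally on $\PP^{n-1}$''. That is impossible: $n$ elements never form a regular sequence in an $(n-1)$-dimensional local ring. Indeed, when $X\neq\varnothing$ the Koszul homology of $\ff$ over $R$ does not vanish in positive homological degrees for $m\gg0$, since its Euler characteristic in such degrees is $0$ while $\dim(R/I)_m=N>0$.

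But that is not the complex whose determinant you take. You need the Koszul homology of $F=\ff+t\boldsymbol g$ over $A=\CC[[t]][x_1,\dots,x_n]$, and there the vanishing holds for a different reason:
\begin{itemize}
\item Since $\Res(F)\neq0$ in $\CC((t))$, the zero scheme $V(F)\subset\PP^{n-1}_{\CC[[t]]}$ has empty generic fibre. It therefore lies in the special fibre, where it is set-theoretically the finite set $X$.
\item At each such point the local ring of $\PP^{n-1}_{\CC[[t]]}$ is regular of dimension $n$; the parameter $t$ supplies the extra dimension.
\item The $n$ dehomogenized $F_i$ generate an ideal primary to its maximal ideal. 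So they form a system of parameters, hence a regular sequence.
\end{itemize}
Consequently the sheafified Koszul complex of $F$ is exact in positive degrees. Each $H_i(F)$ with $i\geq1$ is then a finitely generated graded $A$-module with zero sheafification, so $H_i(F)_m=0$ for $m\gg0$. Your Serre-vanishing formulation also works once this local exactness is known.

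This is exactly where zero-dimensionality of $X$ enters, and the fallback through local Poisson factors is unnecessary. With this step in place, $\operatorname{ord}_t\Res(F)=\length_{\CC[[t]]}(A/F)_m$, and your Nakayama argument gives the bound $\geq N$.

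\textbf{Comparison with the paper.} The paper does not reprove this theorem directly; it recovers it as the zero-dimensional case of \cref{cor:main-resultant}.
\begin{itemize}
\item There, the remaining intersections of a general line $\ff-s\hh$ with the resultant hypersurface are identified with coincidence points of $\phi_{\ff}$ and $\phi_{\hh}$. These are counted in the Chow ring of $\PP^{n-1}\times\PP^{n-1}$, giving $\mu(\ff)=\sum_r (n-r)d^{n-r-1}z_r$.
\item The bound then comes from $z_{n-1}\geq\deg X$, via the inequality $\lambda\geq\mult_Z(X)$ of \cref{lem:master}.
\item That argument uses only the degree and zero locus of $\Res$, never its determinantal construction, and it handles $X$ of arbitrary dimension.
\end{itemize}
Your argument needs heavier algebraic input: the Koszul-determinant formula for $\Res$ and the divisor formula for determinants of generically exact complexes over a DVR. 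It is also intrinsically limited to zero-dimensional $X$, since otherwise higher Koszul homology of $F$ survives and the alternating sum no longer reduces to $\length H_0$.

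In exchange, it gives an exact local expression for the order of $\Res$ along the line, namely $\sum_{p\in X}\length\,\mathcal{O}_{\PP^{n-1}_{\CC[[t]]},p}/(F)$. It also makes the slack in the inequality visible: $\mu(\ff)=N$ exactly when, for general $\boldsymbol g$ and $m\gg0$, $t$ annihilates $(A/F)_m$.
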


For binary forms, \cref{thm:roy-ghidelli} follows from \cref{ex:bivariate}. 
We note that the inequality \eqref{eq:roy-ghidelli} need not be an equality when $n\geq 3$.
For example, the system $\ff=(x^2,xy,y^2)$ over $\CC[x,y,z]$ defines a zero-dimensional projective scheme of degree $3$ (the point $[0:0:1]$ with multiplicity $3$), whereas the resultant vanishes at $\ff$ with multiplicity $4$.

In this paper, we provide an exact formula for $\mu(\ff)$ in terms of the projective degrees of the rational map associated with $\ff$, see \cref{thm:main-resultant}. 
By relating the projective degrees to the geometry of the projective zero scheme, we obtain a lower bound for the multiplicity in terms of the degrees, multiplicities and dimensions of the irreducible components, see \cref{cor:main-resultant}.

\subsection{Resultant multiplicity via projective degrees}
Let $I=(f_1,\ldots,f_n)$ be the ideal of $R$ generated by $\ff$, and let \[
X \coloneqq \Proj(R/I)
\] be its projective zero scheme.
We choose $n-1$ general linear combinations \[
g_{\ell} = \sum_{j=1}^n a_{\ell j} f_j, \qquad \ell=1,\ldots,n-1, \; a_{\ell,j}\in\CC.
\]
For $0\leq \ell\leq n-1$, we set \[
K_\ell \coloneqq (g_1,\ldots,g_\ell),\qquad J_\ell \coloneqq K_\ell : I^{\infty},\qquad Y_\ell \coloneqq \Proj(R/J_\ell),
\] 
where $K_0\coloneqq (0)$.
Thus, $Y_\ell$ is the scheme-theoretic closure of $
\cZ_{\PP}(g_1,\ldots,g_\ell)\setminus X.
$ We write \[
y_\ell\coloneqq \deg Y_\ell,
\] with the convention that the empty scheme has degree zero.

We note the following interpretation:
If $\ff\neq 0$, then it defines a rational map \[
\phi_{\ff}:\PP^{n-1}\dashrightarrow \PP^{n-1},\qquad [p]\mapsto [f_1(p):\cdots:f_n(p)]
\] with base scheme $X$, and $y_0,\ldots,y_{n-1}$ are its \emph{projective degrees}: 
$Y_\ell$ is the closure of the inverse image of a general linear subspace of codimension $\ell$, and $y_\ell$ is its degree.
See \cite[Example~19.4]{Harris-93} for the definition of the projective degrees,
and \cref{lem:projective-degrees} (or \cite[7.1.3]{Dolgachev-12}) for an equivalent reformulation in terms of the class of the graph of $\phi_{\ff}$ in the Chow ring of $\PP^{n-1}\times\PP^{n-1}$.

The first result of this paper is an exact formula for resultant multiplicity in terms of $y_\ell$'s.

\begin{restatable}[Resultant multiplicity formula]{theorem}{resultant}
\label{thm:main-resultant}
For every $\ff\in R_d^n$, \[
\mu(\ff) = \sum_{\ell=0}^{n-1} d^{n-\ell-1} \left( d^\ell - y_\ell\right).
\]
\end{restatable}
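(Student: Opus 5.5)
The plan is to reduce $\mu(\ff)$ to a one-variable order of vanishing along a general line through $\ff$, and to compute that order by counting roots at infinity via an intersection computation on the graph of $\phi_{\ff}$. The case $\ff=0$ is trivial: then $X=\PP^{n-1}$, all $y_\ell=0$, and $\mu(\ff)=\deg\Res=nd^{n-1}$. So assume $\ff\neq0$.

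\textbf{Step 1 (restriction to a line).} The multiplicity of a hypersurface at a point equals the order of vanishing of its equation along a general line through that point. Hence, for a general $\hh\in R_d^n$,
\[
\mu(\ff)=\operatorname{ord}_{t=0}\Res(\ff+t\hh).
\]
Since $\Res$ is homogeneous of degree $nd^{n-1}$ in the coefficients,
\[
\Res(\ff+t\hh)=t^{nd^{n-1}}\Res(\hh+s\ff),\qquad s=1/t.
\]
Therefore $\mu(\ff)=nd^{n-1}-\deg_s \Res(\hh+s\ff)$. Also $\sum_{\ell=0}^{n-1}d^{n-\ell-1}d^\ell=nd^{n-1}$. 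So the theorem is equivalent to
\[
\deg_s\Res(\hh+s\ff)=\sum_{\ell=0}^{n-1}d^{n-1-\ell}\,y_\ell .
\]
Here the leading coefficient is nonzero because $\Res(\hh)\ne0$ for general $\hh$.

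\textbf{Step 2 (roots as a degeneracy locus).} The roots $s$ of $\Res(\hh+s\ff)$ are exactly the values for which some $[p]\in\PP^{n-1}$ satisfies $\hh(p)+s\ff(p)=0$.
\begin{itemize}
\item For general $\hh$, no such $p$ lies in $X$, since otherwise $\hh(p)=0$.
\item Each $p\notin X$ with $\hh(p)\wedge\ff(p)=0$ determines a unique $s$.
\end{itemize}
So $\deg_s$ should equal the length of the locus
\[
D=\{[p]\notin X : \hh(p)\in \CC\,\ff(p)\}.
\]
I would lift $D$ to the graph $\Gamma\subset\PP^{n-1}\times\PP^{n-1}$, the closure of the graph of $\phi_{\ff}$. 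On $\Gamma$ it is the zero locus of the section $\hh(p)\bmod q$ of the rank-$(n-1)$ bundle $Q\otimes\mathcal O(d,0)$, where $Q=\mathcal O^n/\mathcal O(0,-1)$ is the tautological quotient on the second factor. Its top Chern class is
\[
c_{n-1}\bigl(Q(dH_1)\bigr)=\sum_{k=0}^{n-1}(dH_1)^{k}H_2^{\,n-1-k}.
\]
By \cref{lem:projective-degrees}, $[\Gamma]=\sum_\ell y_\ell H_1^{\ell}H_2^{\,n-1-\ell}$, up to the paper's convention on indices. Pairing the two classes in the Chow ring of $\PP^{n-1}\times\PP^{n-1}$ gives exactly $\sum_\ell d^{\,n-1-\ell}y_\ell$.

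\textbf{Step 3 (transversality — the main obstacle).} Three things must be justified, and this is where I expect the real work.
\begin{enumerate}
\item The section of $Q(dH_1)$ restricted to $\Gamma$ has no zeros on the exceptional part of $\Gamma$ lying over $X$. At a point $(p,q)\in\Gamma$ with $p\in X$, the requirement would be $\hh(p)=0$ modulo $q$. For general $\hh$, the values $\hh(p)$ with $p$ ranging over $X$ form a family of dimension at most $\dim X$. The exceptional fibers over $X$ together have dimension at most $n-2$, so a general $\hh$ avoids them; I would verify this with a dimension count on an incidence variety.
\item The zeros off $X$ are reduced, so $D$ is a finite reduced set of points, each counted once. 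This follows from a Bertini/Kleiman-type argument: $\hh$ ranges over a base-point-free linear system on $\Gamma\setminus(\text{exceptional part})$.
\item Each root $s$ of $\Res(\hh+s\ff)$ is simple and corresponds to a unique $p$. I would show this by noting that $\partial_s\Res(\hh+s\ff)$ at a root equals the differential of $\Res$ at a system with a unique simple zero, which is nonzero by the smoothness criterion cited from \cite{GGTV-23}. Genericity of $\hh$ ensures $\hh+s\ff$ has a unique simple zero for each root.
\end{enumerate}
Combining the three points, $\deg_s\Res(\hh+s\ff)=\#D=\int[\Gamma]\cdot c_{n-1}$. Together with Step 1, this yields the formula in the statement.
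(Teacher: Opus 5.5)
Your route is the paper's route in vector-bundle language. Since $\Res(\hh)\neq 0$, the section $\hh(p)\bmod q$ of $Q\otimes\mathcal{O}(d,0)$ on all of $\PP^{n-1}\times\PP^{n-1}$ vanishes exactly and reducedly along $\Gamma_{\hh}$. So your class $c_{n-1}(Q(dH_1))=\sum_k d^kH_1^kH_2^{n-1-k}$ is $[\Gamma_{\hh}]$, and your pairing is the paper's $[\Gamma_{\ff}]\cdot[\Gamma_{\hh}]$ from \cref{lem:coincidence-pts}. Your items 1 and 2 are the paper's incidence-variety dimension count and generic-smoothness argument. The substitution $s=1/t$ is just the paper's count of the points of $L\cap\cR_{\PP}$ other than $[\ff]$.

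The step that does not hold as argued is the simplicity of the roots in item 3. At a root $s_0$, put $\mathbf{g}\coloneqq\hh+s_0\ff$. Then $\partial_s\Res(\hh+s\ff)|_{s=s_0}=\mathrm{d}\Res_{\mathbf{g}}(\ff)$ is a \emph{directional} derivative. The smoothness criterion of \cite{GGTV-23} only gives $\mathrm{d}\Res_{\mathbf{g}}\neq 0$, i.e., that $T_{\mathbf{g}}\cR$ is a hyperplane. It says nothing about whether the particular direction $\ff$ lies in that hyperplane, i.e., whether the line $\hh+\CC\ff$ is tangent to $\cR$ at $\mathbf{g}$. This transversality is exactly what the paper asserts without proof in \cref{lem:count}, and your proposed justification does not establish it.

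You can close the gap with your own item 2.
\begin{enumerate}
\item From the incidence correspondence, if $p$ is the unique simple zero of $\mathbf{g}$, then $T_{\mathbf{g}}\cR=\{\delta : \delta(p)\in\operatorname{im}J_{\mathbf{g}}(p)\}$. Here $J_{\mathbf{g}}(p)$ is the Jacobian matrix, of rank $n-1$, with $J_{\mathbf{g}}(p)p=0$ by Euler.
\item Eliminating $s$ on $\{f_i\neq 0\}$ shows that the scheme $\{(s,[p]) : \hh(p)+s\ff(p)=0\}$ is isomorphic to $D$.
\item Its Zariski tangent space at $(s_0,[p])$ is the kernel of $(\sigma,v)\mapsto\sigma\ff(p)+J_{\mathbf{g}}(p)v$ on $\CC\oplus\CC^n/\CC p$.
\end{enumerate}
Reducedness of $D$ at $p$ therefore forces $\ff(p)\notin\operatorname{im}J_{\mathbf{g}}(p)$, i.e., $\mathrm{d}\Res_{\mathbf{g}}(\ff)\neq 0$.

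Alternatively, apply generic smoothness to the quotient map $R_d^n\to R_d^n/\CC\ff$ restricted to $\cR$. A general fiber is then reduced and avoids $\operatorname{Sing}\cR$.

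Uniqueness of the zero, which you need for the tangent-space formula, follows from a dimension count. Let $\Sigma\subset\cR$ be the codimension-$\geq 2$ locus of systems with several or non-reduced zeros. Then $\dim(\Sigma+\CC\ff)\leq\dim R_d^n-1$, so a general $\hh$ avoids it. With this repair your argument is complete.
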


If $\ff\neq 0$, then $y_0=1$, so the term with $\ell=0$ vanishes and the sum may start at $\ell=1$.
If $\ff=0$, then all the $Y_\ell$'s are empty, and \cref{thm:main-resultant} yields \[
\mu(0) = n d^{n-1},
\] which is the total degree of the resultant, see \cite[(3.1)~Theorem]{Cox-Little-Oshea-05}.

As an example, let us consider the system $\ff=(x_1 q,x_2q,\dots,x_n q)$ where $q$ is a general form of degree $d-1$ with $d\geq 2$.
In this case, $X=\cZ_{\PP}(q)\subset\PP^{n-1}$ is a hypersurface and the Roy--Ghidelli bound does not apply.
Outside $X$, the zero scheme of $g_1,\dots,g_\ell$ is a general linear subspace of codimension $\ell$, for $\ell=1,\dots,n-1$.
Hence, we have $y_\ell=1$ and
\cref{thm:main-resultant} yields \[
\mu(\ff) = nd^{n-1} - \sum_{\ell=0}^{n-1} d^{\ell}.
\]

A notable feature of the proof of \cref{thm:main-resultant} is that it is entirely geometric: beyond the degree of the resultant, it uses only its characterization as the defining polynomial of the locus of systems with a common projective zero, and does not invoke its determinantal construction.

\subsection{Geometric estimate}
The resultant multiplicity formula (\cref{thm:main-resultant}) via projective degrees is exact, but its relationship with the geometry of the projective zero scheme is not transparent.
We now obtain a directly geometric, though non-exact, lower bound for the multiplicity.

As we have seen previously in \cref{ex:bivariate,ex:linear}, and from \cref{thm:roy-ghidelli}, the multiplicity of the resultant can detect both the dimension and the degree of $X$ in special cases.
Our next result generalizes these observations and gives a lower bound for $\mu(\ff)$ directly in terms of $X$.
It states that each irreducible component $X_k$ of the zero scheme contributes at least \[
\mult_{X_k}(X)\deg(X_k)\, (\dim(X_k)+1) \, d^{\dim (X_k)}
\] to $\mu(\ff)$, where $\mult_{X_k}(X)$ denotes the multiplicity of $X$ along $X_k$.

\begin{restatable}[Geometric multiplicity bound]{corollary}{lowerbound}
\label{cor:main-resultant}
    Let $X_1,\dots,X_c$ be the irreducible components of~$X$.
    Then, \[
    \mu(\ff)\geq \sum_{k=1}^c \mult_{X_k}(X)\deg(X_k)\, (\dim(X_k)+1)\, d^{\dim (X_k)}.
    \]
\end{restatable}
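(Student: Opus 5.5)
The plan is to deduce the bound from \cref{thm:main-resultant}. That reduces the task to bounding each defect $d^\ell - y_\ell$ from below by contributions of the components of $X$. Write $\mathrm{mdeg}_j(X)\coloneqq\sum_{\dim X_k=j}\mult_{X_k}(X)\deg X_k$ for the degree of the top-dimensional cycle of $X$ in dimension $j$. The key intermediate claim, for $1\le \ell\le n-1$, is
\[
d^\ell - y_\ell \;\geq\; \sum_{k:\ \dim X_k\geq n-1-\ell} \mult_{X_k}(X)\deg(X_k)\, d^{\ell-(n-1-\dim X_k)} .
\]
For $\ell=0$ we use $1-y_0\geq 0$, with equality unless $\ff=0$. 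The case $\ff=0$ is trivial, since then $\mu(0)=nd^{n-1}$ and $X=\PP^{n-1}$ contributes exactly $n d^{n-1}$.

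Granting the claim, the corollary follows by summing. Fix a component $X_k$ of dimension $j$. It appears in the claim exactly for $\ell = n-1-j,\dots,n-1$, which are $j+1$ values. Each such $\ell$ is weighted by $d^{n-1-\ell}$ in \cref{thm:main-resultant}, so $X_k$ contributes
\[
\mult_{X_k}(X)\deg X_k \sum_{\ell=n-1-j}^{n-1} d^{n-1-\ell}\, d^{\ell-(n-1-j)} = \mult_{X_k}(X)\deg X_k\,(j+1)\,d^{j},
\]
which is exactly the asserted bound.

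To prove the claim I would use the residual/excess intersection description of $V(g_1,\dots,g_\ell)$. The $g_i$ are general members of the linear system $\langle f_1,\dots,f_n\rangle$, whose base scheme is $X$. The scheme $Z_\ell = V(g_1,\dots,g_\ell)$ is a complete intersection of $\ell$ degree-$d$ hypersurfaces, but it may have excess components inside $X$. Its expected cycle class is $d^\ell[H^{\ell}]$. Fulton's refined intersection theory (the excess intersection formula / Fulton–MacPherson distinguished varieties) writes $d^\ell H^\ell$ as a sum of contributions from the distinguished varieties. These are $Y_\ell$, contributing degree $y_\ell$, together with distinguished varieties supported in $X$, and all contributions are effective because the ambient bundle $\mathcal{O}(d)^{\ell}$ is ample. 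Among the distinguished varieties are the irreducible components of $X$ of dimension $\geq n-1-\ell$. Such a component of dimension $j$ contributes the degree-$(n-1-\ell)$ part of $s(X_k\text{-part of the normal cone})\cdot c(\mathcal{O}(d))^\ell$. Its leading piece is $\mult_{X_k}(X)[X_k]$ times $c_1(\mathcal{O}(d))^{\ell-(n-1-j)}$, plus a nonnegative correction by ampleness and positivity (Fulton, Thm.~12.2 / Example~12.2.8). This has degree at least $\mult_{X_k}(X)\deg X_k\, d^{\ell-(n-1-j)}$, which gives the claim. Alternatively, one could induct on $\ell$: cut $Y_{\ell}$ by one more general $g_{\ell+1}$, and bound the part of $Y_\ell\cap V(g_{\ell+1})$ lying in $X$. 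This part has degree at least the multiplicity-weighted degree of the components of $X$ of dimension $n-2-\ell$, plus $d$ times the previous excess.

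I expect the main obstacle to be this positivity step. Concretely, we need that the contribution of each component $X_k$ is at least the naive $\mult_{X_k}(X)\deg X_k\,d^{\ell-\codim X_k}$, with $\mult$ the scheme multiplicity (length at the generic point), and that different components do not interfere. I would first try the inductive route, since it needs only Bézout-type inequalities. At each step, $Y_{\ell}\cdot V(g_{\ell+1}) = Y_{\ell+1} + (\text{effective cycle supported in } X)$. The key point is that the embedded part carries, along each $X_k$ of the right dimension, multiplicity at least $\mult_{X_k}(X)$, because $I\subseteq J$ locally gives length comparisons. The harder accounting is tracking the contributions $d^{\ell-\codim X_k}$ from higher-dimensional components through successive cuts. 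For this I would fall back on Fulton's positivity of the excess contributions.
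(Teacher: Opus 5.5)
Your reduction to \cref{thm:main-resultant}, the intermediate claim and the final summation are all correct, and they match the paper. With $w_\ell=d^\ell-y_\ell$ and $z_r=dy_{r-1}-y_r$ one has the identity $w_\ell=d\,w_{\ell-1}+z_\ell$, hence $w_\ell=\sum_{r\le\ell}d^{\ell-r}z_r$. So your intermediate claim follows from the paper's inequality $z_r\ge\sum_{\codim X_k=r}\mult_{X_k}(X)\deg X_k$, and your count $(j+1)d^j$ is the paper's rewriting $\mu(\ff)=\sum_r(n-r)d^{n-r-1}z_r$. The ``harder accounting'' you anticipate therefore does not exist. The factor $d^{\ell-\codim X_k}$ for higher-dimensional components is carried by the term $d\,w_{\ell-1}$ of an arithmetic identity, so no excess-intersection positivity is needed. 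Read literally, your sentence that the part of $Y_\ell\cap\cZ_{\PP}(g_{\ell+1})$ inside $X$ has degree at least ``\dots plus $d$ times the previous excess'' is false. That part has degree exactly $z_{\ell+1}$, which can be smaller than $d\,w_\ell$. For example, take the quadrics $xq_1,xq_2,xq_3$ in three variables, with $x,q_i$ general linear forms; then $z_2=1<2=d\,w_1$.

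The genuine gap is in the one step with real content, $z_r\ge\sum_{\codim X_k=r}\mult_{X_k}(X)\deg X_k$. Your justification, ``$I\subseteq J$ locally gives length comparisons'', has the inclusion backwards: $I\subseteq J$ would bound the multiplicity of the cut \emph{above} by $\mult_{X_k}(X)$. It also skips the actual issue. Let $\fp$ be the generic point of a codimension-$r$ component $X_k$ and $S=\mathcal{O}_{\PP^{n-1},\fp}$. What is needed is $(J_{r-1}+(g_r))S\subseteq IS$, so that $S/(J_{r-1}+(g_r))S\twoheadrightarrow S/IS$. This is not automatic, because $J_{r-1}=K_{r-1}:I^\infty$ is a saturation and is typically not contained in $I$; in the example above, $J_1$ is generated by a linear form. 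If $\fp S$ were an embedded associated prime of $(g_1,\dots,g_{r-1})S$, saturating by the $\fp S$-primary ideal $IS$ would enlarge it and the comparison would be lost. The paper settles this in \cref{lem:master}~(\ref{it:master3}):
\begin{itemize}
\item general combinations $g_1,\dots,g_r$ form a regular sequence and a system of parameters in the $r$-dimensional regular local ring $S$;
\item hence for $j<r$ the ring $S/(g_1,\dots,g_j)S$ is Cohen--Macaulay of positive dimension, so $\fp S$ is not associated to it and $J_jS=(g_1,\dots,g_j)S$;
\item then $X_k$ is a component of $Y_{r-1}\cap\cZ_{\PP}(g_r)$ of multiplicity $\length_S(S/(g_1,\dots,g_r)S)\ge\length_S(S/IS)$.
\end{itemize}
Your Fulton fallback does not supply this either, as you describe it. In $\{c(\mathcal{O}(d)^{\oplus\ell})\cap s(C)\}_{n-1-\ell}$ the top term over $X_k$ is $\binom{\ell}{r}d^{\ell-r}m[X_k]$, not $d^{\ell-r}m[X_k]$. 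The lower Segre terms can be negative: for a line in $\PP^3$ cut out by two linear forms, with $d=1$ and $\ell=3$, the contribution is $3-2=1$. So global generation only gives nonnegativity of each whole contribution, not of its excess over $m\,d^{\ell-r}\deg X_k$. Moreover, the normal-cone multiplicity $m$ would still have to be compared with $\mult_{X_k}(X)$ by a separate local argument.
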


When $X$ is a zero-dimensional scheme, \cref{cor:main-resultant} yields \[
\mu(\ff)\geq \sum_{k=1}^c \mult_{X_k}(X)\deg(X_k) = \deg(X),
\] and we recover \cref{thm:roy-ghidelli}.
This also shows that the inequality in \cref{cor:main-resultant} can be strict since the Roy--Ghidelli bound need not be sharp.

Before we continue with the applications, we comment on the proof of \cref{cor:main-resultant}.
Define \[
z_r \coloneqq d y_{r-1}-y_r,\qquad r=1,\dots,n-1.
\] 
Then, \cref{thm:main-resultant} is equivalently stated as \[
\mu(\ff) = \sum_{r=1}^{n-1} (n-r) d^{n-r-1} z_r.
\] The number $z_r$ measures the degree lost in the $r$-th intersection $Y_{r-1}\cap \cZ_{\PP}(g_r)$ to the base locus $X$.
Some of this lost degree comes directly from the codimension-$r$ components of $X$, and we have \[
z_r \geq \sum\mult_{X_k}(Y_{r-1}\cap\cZ_{\PP}(g_r)) \, \deg(X_k) \geq  \sum\mult_{X_k}(X)\, \deg(X_k)
\] where the sum runs over all components $X_k$ with $\codim(X_k)=r$.
For such a component, we have \[
(n-r) d^{n-r-1} = (\dim(X_k)+1 )d^{\dim(X_k)}.
\]

\subsection{Tensor eigenvalues and multiplicities}

Tensor eigenvalues were introduced, in closely related forms, by Lim \cite{Lim-05} and  Qi \cite{Qi-05}.
Our notation and presentation follow \cite{GGTV-23,CFGL-26}.
Let $
\T \in \CC^n \otimes \left((\CC^{n})^\ast\right)^{\otimes d}
$ be a tensor of order $d+1$, where $(\CC^n)^\ast$ denotes the dual of the vector space $\CC^n$. 
We can view $\T$ as a linear map
\[
\T: (\CC^n)^{\otimes d}\rightarrow \CC^n.
\] 

The tensor $\T$ also induces a homogeneous polynomial map (that we also denote by $\T:\CC^n\rightarrow\CC^n$) via $\T(\omega) = \T(\omega^{\otimes d})$.
In standard coordinates, $\T$ is a polynomial map $\omega\mapsto (f_1(\omega),\dots,f_n(\omega))$ defined by $n$ homogeneous polynomials of degree $d$.
Conversely, every polynomial system $\ff$ of $n$ forms of degree $d$ defines a partially symmetric tensor $\T\in\CC^n\otimes \operatorname{Sym}^d((\CC^n)^\ast)$ via $\T(\omega)=(f_1(\omega),\dots,f_n(\omega))$.
We define the resultant of $\T$ as \[\Res(\T)\coloneqq\Res(f_1,\dots,f_n).
\] 

Let $\T,\ft\in \CC^n \otimes \left((\CC^{n})^\ast\right)^{\otimes d}$ be two tensors with $\Res(\ft)\neq 0$. 
We call a vector $\omega\in\CC^n\setminus\{0\}$ a \emph{$\ft$-eigenvector} of $\T$ with \emph{$\ft$-eigenvalue} $\lambda$ if \begin{equation}
\label{eq:eigenpair}
\T(\omega) = \lambda \ft(\omega).
\end{equation}
We denote by $E_{\T,\ft}(\lambda)$ the affine scheme defined by \cref{eq:eigenpair}.
We will call it the \emph{eigenscheme} corresponding to the $\ft$-eigenvalue $\lambda$.
Geometrically, the underlying set behind its reduced scheme is \[
\left(E_{\T,\ft}(\lambda)\right)_{\red} = \{\omega\in\CC^n\mid (\T-\lambda\ft)(\omega)=0\}.
\]
We define \[
\gm_{\T,\ft}(\lambda) \coloneqq \dim \left( E_{\T,\ft}(\lambda)\right)
\] to be the \emph{geometric multiplicity} of the $\ft$-eigenvalue $\lambda$, and \[
\sm_{\T,\ft}(\lambda)\coloneqq \dim\left( \operatorname{span}_{\CC}(E_{\T,\ft}(\lambda)_{\red})\right)
\] to be its \emph{span multiplicity}---the dimension of the linear span of the reduced scheme $\left(E_{\T,\ft}(\lambda)\right)_{\red}\subset\CC^{n}$.

When $d=1$, we can identify tensors in $ \CC^n\otimes(\CC^n)^\ast$ with matrices, i.e., linear maps $\CC^n\rightarrow\CC^n$. 
In this case, we have $\Res(\T)=\det(\T)$ and $\Res(\ft)=\det(\ft)$.
Since we assumed $\Res(\ft)\neq 0$, the $\ft$-eigenvalues coincide with the eigenvalues of the matrix $\ft^{-1}\T$.
Hence, the eigenscheme $E_{\T,\ft}(\lambda)$ is an eigenspace of $\ft^{-1}\T$, and geometric and span multiplicities coincide.

When $d>1$, however, $E_{\T,\ft}(\lambda)$ is not a linear subspace in general, since it is defined by a degree-$d$ polynomial system $\ff-\lambda \hh$, where $\T(\omega)=(f_1(\omega),\ldots,f_n(\omega))$ and $\ft(\omega)=(h_1(\omega),\ldots,h_n(\omega))$.
By homogeneity, $\ff-\lambda\hh$ defines a projective zero scheme whose affine cone is $E_{\T,\ft}(\lambda)$:
\[
X\coloneqq \cZ_{\PP}(\ff-\lambda \hh)
\]

In analogy with matrices, one can define a notion of \emph{algebraic multiplicity}, $\am_{\T,\ft}(\lambda)$, of a $\ft$-eigenvalue $\lambda$. 
Let us define the \emph{$\ft$-characteristic polynomial} of $\T$ by
\[ p_{\T,\ft}(s)\coloneqq\Res(\T-s \ft).
\]
We can see that $\ft$-eigenvalues coincide with the roots of $p_{\T,\ft}(s)$.
The algebraic multiplicity of a $\ft$-eigenvalue $\lambda$ is defined as the multiplicity of the $\ft$-characteristic polynomial at $s=\lambda$:\[
\am_{\T,\ft}(\lambda) \coloneqq \mult_{s=\lambda}(p_{\T,\ft}(s))
\]
When $d=1$, we have $\Res(\T- s\ft)=\det(\T-s\ft)=\det(\ft)\det(\ft^{-1}\T-s I_n)$.
Hence, $p_{\T,\ft}(s)$ coincides with the characteristic polynomial of the matrix $\ft^{-1}\T$ up to the constant $\det(\ft)$.

\begin{remark}
When $\ft=(x_1^d,\dots,x_n^d)$ as a polynomial system, $p_{\T,\ft}(s)=\Res(\T-s\ft)$ coincides with Canny's \emph{generalized characteristic polynomial} \cite{Canny-90}, defined as $
C(s) \coloneqq \Res( f_1 - s x_1^d, \dots, f_n - s x_n^d )$. 
Canny studied the lowest degree coefficient of $C(s)$ as an elimination operator, and he speculated that the order of vanishing of the generalized characteristic polynomial should be viewed as a ``measure of intersection multiplicities (in some appropriate sense)'' of the $f_i$'s.
\end{remark}

There are several conjectures in the literature on multiplicities of tensor eigenvalues.
Seeking a tensorial analogue of the linear algebra fact ``algebraic multiplicity is at least geometric multiplicity'',
Qi conjectured that the span multiplicity does not exceed the algebraic multiplicity:\begin{equation}
\label{eq:qi-conj}
\am_{\T,\ft}(\lambda) \geq \sm_{\T,\ft}(\lambda)\tag{\cite[Conjecture~1]{Qi-05}}
\end{equation}
Hu and Ye conjectured that if $E_1,E_2,\dots,E_c$ are the irreducible components of $E_{\T,\ft}(\lambda)$, then \begin{equation}
\label{eq:hu-ye}
\am_{\T,\ft}(\lambda)\geq \sum_{k=1}^c \dim(E_k) \, d^{\dim(E_k)-1}.\tag{\cite[Conjecture~1.1]{Hu-Ye-16}}
\end{equation} 
Since $\dim(E_k)=\dim\left(E_{\T,\ft}(\lambda)\right)$, for some $k$, this conjecture implies \begin{equation}
\label{eq:hu-ye-weak}
\am_{\T,\ft}(\lambda)\geq \gm_{\T,\ft}(\lambda)\, d^{\gm_{\T,\ft}(\lambda)-1}.
\end{equation}

Several special cases of the Hu--Ye conjecture were established for tensors that arise from uniform hypergraphs.
Cooper and Fickes \cite{Cooper-Fickes-22} confirmed \eqref{eq:hu-ye} for the zero eigenvalue of the adjacency tensor of a \emph{loose 3-hyperpath}.
Zheng \cite{Zheng-24} proved the weaker inequality \eqref{eq:hu-ye-weak} for nonzero eigenvalues of a $k$-uniform hyperpath.
Fan \cite{Fan-26} studied the Hu--Ye conjecture for weakly irreducible nonnegative tensors and confirmed it for the eigenvalues of $k$-uniform hypertrees, $k$-th powers of connected simple graphs and complete $3$-uniform hypergraphs on at least $4$ vertices, whose eigenvalue modulus agrees with the spectral radius.
We refer to \cite{Cooper-Dutle-12} for general information on eigenvalues of hypergraphs.

We note that the span and geometric multiplicities of an eigenvalue depend only on the reduced scheme $\left(E_{\T,\ft}(\lambda)\right)_{\red}$ rather than $E_{\T,\ft}(\lambda)$.
Hence, the conjectures mentioned above do not take into account the scheme structure of $E_{\T,\ft}(\lambda)$.
Very recently, Canino, Flavi, Galuppi and Luan \cite{CFGL-26} proposed two conjectures that relate algebraic multiplicity to the geometry of the eigenscheme.
Our focus will be on the following conjecture: 
\begin{equation}
\label{eq:cfgl-conjecture}
\am_{\T,\ft}(\lambda)\geq \sum_{k=1}^c \mult_{E_k}(E_{\T,\ft}(\lambda))\deg(E_k)\,\dim(E_k) \, d^{\dim(E_k) -1},\tag{\cite[Conjecture~1.5]{CFGL-26}}
\end{equation}
where $\mult_{E_k}(E_{\T,\ft}(\lambda))$ denotes the multiplicity of the affine scheme $E_{\T,\ft}(\lambda)$ along $E_k$ and $\deg(E_k)$ denotes the degree of the variety $E_k$.\footnote{In \cite{CFGL-26}, the authors define the degree of an irreducible component as the number of intersection points with a general linear subspace of complementary dimension, counted with multiplicity. 
In this paper, we always assume that irreducible components are reduced. Thus the quantity denoted by $\deg(E_k)$ in \cite{CFGL-26} is $\mult_{E_k}(E_{\T,\ft}(\lambda)) \deg(E_k)$ in our notation.}
Canino et al. observed that \eqref{eq:cfgl-conjecture} implies Hu and Ye's conjecture.
Indeed, since $\mult_{E_k}(E_{\T,\ft}(\lambda))\deg(E_k)\geq 1$ for a component $E_k$, \[
\mult_{E_k}(E_{\T,\ft}(\lambda))\deg(E_k)\dim(E_k)d^{\dim(E_k)-1}\geq \dim (E_k) d^{\dim(E_k) -1},
\] so \eqref{eq:hu-ye} follows from \eqref{eq:cfgl-conjecture}.

Canino et al. established \eqref{eq:cfgl-conjecture} in several cases (including when $n=2$ and when $E_{\T,\ft}(\lambda)$ is pure of dimension $n-1$) but left the general case open.
We prove it in full.
\begin{restatable}{theorem}{eigenvalue}
\label{thm:main-tensor}
Let $\T,\ft\in\CC^n\otimes((\CC^n)^\ast)^{\otimes d}$ be two tensors with $\Res(\ft)\neq 0$.
    Let $\lambda\in\CC$ and $E_1,E_2,\dots,E_c$ be the irreducible components of the eigenscheme $E_{\T,\ft}(\lambda)$.
    Then, \[
    \am_{\T,\ft}(\lambda) \geq \sum_{k=1}^c \mult_{E_k}(E_{\T,\ft}(\lambda))\;\deg(E_k)\; \dim(E_k)\; d^{\dim(E_k)-1}. 
    \]
\end{restatable}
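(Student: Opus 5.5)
The plan is to reduce \cref{thm:main-tensor} to \cref{cor:main-resultant} applied to the system $\ff-\lambda\hh$. Here $\T(\omega)=\ff(\omega)$ and $\ft(\omega)=\hh(\omega)$ are viewed as polynomial systems. Replacing the tensors by their partially symmetric versions changes neither $\Res$ nor the eigenscheme, since both depend only on the polynomial maps. The key observation is that the algebraic multiplicity is the order of vanishing of $\Res$ restricted to the line $s\mapsto \ff-s\hh$ at $s=\lambda$. Writing $s=\lambda+t$ gives
\[
p_{\T,\ft}(\lambda+t)=\Res\bigl((\ff-\lambda\hh)-t\hh\bigr).
\]
The lowest-order nonzero term of the Taylor expansion of $\Res$ at $\ff-\lambda\hh$ has degree $\mu(\ff-\lambda\hh)$. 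Restricting to the line in direction $-\hh$ gives order of vanishing at least $\mu(\ff-\lambda\hh)$; equality holds exactly when the tangent cone does not contain $\hh$. So I will use
\[
\am_{\T,\ft}(\lambda)\ \geq\ \mu(\ff-\lambda\hh).
\]
The hypothesis $\Res(\ft)\neq0$ guarantees that $p_{\T,\ft}$ is not identically zero, since its leading coefficient is $\pm\Res(\hh)$. Thus $\am$ is a finite number, but the inequality holds regardless.

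Next I will apply \cref{cor:main-resultant} to $\ff-\lambda\hh$ and translate from the projective scheme $X=\cZ_{\PP}(\ff-\lambda\hh)$ to its affine cone $E_{\T,\ft}(\lambda)$. If $X$ is empty, $E$ is the origin and the right-hand side of \cref{thm:main-tensor} has only $\dim 0$ terms, which contribute $0$; so the bound is trivial. Otherwise the irreducible components of $E$ of positive dimension are exactly the cones $E_k=C(X_k)$ over the components $X_k$ of $X$. Under this correspondence:
\begin{itemize}
\item $\dim E_k=\dim X_k+1$;
\item $\deg E_k=\deg X_k$, since the degree of a cone equals the degree of its base;
\item $\mult_{E_k}(E)=\mult_{X_k}(X)$, because the local ring of $E$ at the generic point of $C(X_k)$ is a localization of the local ring of $X$ at the generic point of $X_k$ (the dehomogenization $R_{(\fp)}$ versus $R_{\fp}$); in particular the lengths agree.
\end{itemize}
Substituting, each term $\mult_{X_k}(X)\deg(X_k)(\dim X_k+1)d^{\dim X_k}$ becomes $\mult_{E_k}(E)\deg(E_k)\dim(E_k)d^{\dim E_k-1}$, which is exactly the claimed bound.

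One subtlety is the embedded or isolated point at the origin of $E$. It may be an irreducible component only when $X=\emptyset$, and otherwise it is contained in every cone. In either case it contributes $\dim=0$, hence zero, to the sum, so no care is needed.

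The main obstacle is the multiplicity identification $\mult_{E_k}(E)=\mult_{X_k}(X)$. It requires checking that the length of $R_{\fp}/I_{\fp}$ equals the length of the degree-zero localization $(R/I)_{(\fp)}$ at a homogeneous minimal prime $\fp$. I would argue this via the standard isomorphism $R_{\fp}\cong (R_{(\fp)})[u]_{(\text{localized})}$ obtained by inverting a linear form not in $\fp$. This isomorphism is flat with a field as fibre, so lengths are preserved. The other step, the line restriction inequality, is elementary.
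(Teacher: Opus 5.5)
Your proposal is correct and follows the paper's proof: you restrict $\Res$ to the line $s\mapsto \ff-s\hh$ to get $\am_{\T,\ft}(\lambda)\geq\mu(\ff-\lambda\hh)$ via \cref{lem:generic-line}, apply \cref{cor:main-resultant} to $X=\cZ_{\PP}(\ff-\lambda\hh)$, and translate dimension, degree and multiplicity to the affine cone. The only difference is that you justify $\mult_{E_k}(E)=\mult_{X_k}(X)$, which the paper simply asserts: you use the flat local extension $R_{(\fp)}\to R_{\fp}\cong R_{(\fp)}[u]_{\mathfrak{m}R_{(\fp)}[u]}$ with field fibre, where $u$ is a linear form not in $\fp$ and $\mathfrak{m}$ is the maximal ideal of $R_{(\fp)}$.
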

\noindent
In particular, \eqref{eq:hu-ye} and \eqref{eq:cfgl-conjecture} hold.

Furthermore, we prove that \cref{thm:main-tensor} implies \eqref{eq:qi-conj}.
We call a projective variety (irreducible, reduced scheme) $X\subset\PP^{n-1}$ \emph{nondegenerate} if $X$ is not contained in any hyperplane.
Eisenbud and Harris \cite{Eisenbud-Harris-87} proved for a nondegenerate variety that \[
\deg(X) \geq \codim(X) + 1.
\] Note that $X$ is always nondegenerate in its linear span.
Hence, if $X\subset\PP^{n-1}$ is a projective variety and $E\subset \CC^{n}$ is the affine cone over $X$, then we have \[
\dim\left(\operatorname{span}_{\CC}(E)\right) \leq \deg(E) + \dim(E) - 1 \leq \deg(E)\dim(E), 
\] where the second inequality holds since $\deg(E)\geq 1$.
By applying this inequality to each irreducible component of the eigenscheme $E_{\T,\ft}(\lambda)$ we obtain:

\begin{restatable}{corollary}{spanmult}
\label{cor:span-mult}
    Let $\T,\ft\in\CC^n\otimes((\CC^n)^\ast)^{\otimes d}$ be two tensors with $\Res(\ft)\neq 0$, and
    let $\lambda\in\CC$.
    Then, \[
    \am_{\T,\ft}(\lambda)\geq \sm_{\T,\ft}(\lambda).
    \]
\end{restatable}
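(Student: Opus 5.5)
We deduce \cref{cor:span-mult} from \cref{thm:main-tensor} and the Eisenbud--Harris inequality, arguing component by component. Write $E\coloneqq E_{\T,\ft}(\lambda)$ and let $E_1,\dots,E_c$ be its irreducible components. Each $E_k$ is a cone, being a component of a scheme cut out by homogeneous equations. So $(E_k)_{\red}$ is the affine cone over a projective variety $X_k\subset\PP^{n-1}$, with $\dim E_k=\dim X_k+1$ and $\deg E_k=\deg X_k$.

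If $\lambda$ is not an eigenvalue, then $E_{\red}=\{0\}$ and $\sm_{\T,\ft}(\lambda)=0$, so there is nothing to prove. We therefore assume every $E_k$ with $\dim E_k\ge 1$ is a genuine cone. If a component is the origin alone, it contributes nothing to the span and nothing to the right side of \cref{thm:main-tensor}; we discard it.

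\textbf{Step 1: bound the span by the components.} Since $E_{\red}=\bigcup_k (E_k)_{\red}$, we have
\[
\sm_{\T,\ft}(\lambda)\le \sum_{k}\dim \operatorname{span}_{\CC}(E_k).
\]

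\textbf{Step 2: bound each component's span.} Let $L_k=\operatorname{span}(E_k)$. Then $X_k$ is nondegenerate in $\PP(L_k)\cong\PP^{\dim L_k-1}$. Eisenbud--Harris gives
\[
\deg X_k\ge \dim L_k-1-\dim X_k+1,
\]
so
\[
\dim L_k\le \deg E_k+\dim E_k-1\le \deg E_k\,\dim E_k,
\]
where the last step is $(a-1)(b-1)\ge 0$ with $a=\deg E_k\ge1$ and $b=\dim E_k\ge1$.

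\textbf{Step 3: compare with \cref{thm:main-tensor}.} We have $\mult_{E_k}(E)\ge 1$ and $d^{\dim E_k-1}\ge 1$, since $d\ge1$ and $\dim E_k\ge1$. Hence
\[
\deg E_k\dim E_k\le \mult_{E_k}(E)\deg(E_k)\dim(E_k)d^{\dim E_k-1}.
\]
Summing over $k$ and applying \cref{thm:main-tensor} gives
\[
\sm_{\T,\ft}(\lambda)\le\am_{\T,\ft}(\lambda).
\]

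The only delicate point is bookkeeping. We must confirm that the components of the affine eigenscheme are cones, so that projective degree and dimension translate correctly to the affine cone. We also need Eisenbud--Harris applied inside the span, where the codimension is computed in $\PP(L_k)$ rather than in $\PP^{n-1}$.
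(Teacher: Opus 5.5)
Your proposal is correct and follows the paper's own proof. Like the paper, you bound the span by the sum of the component spans, apply Eisenbud--Harris inside each span to get $\dim V_k\le \dim E_k+\deg E_k-1\le \dim E_k\deg E_k$, and then compare termwise with \cref{thm:main-tensor} using $\mult_{E_k}(E_{\T,\ft}(\lambda))\ge 1$ and $d^{\dim E_k-1}\ge 1$.
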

\noindent
In fact, if each component has dimension at least $N$, and multiplicity at least~$M$, then \cref{thm:main-tensor} yields \[
\am_{\T,\ft}(\lambda) \geq \sm_{\T,\ft}(\lambda)\; M \; d^{N-1}.
\] 

Finally, we note that \cref{thm:main-tensor} follows by restricting the resultant to the line $\T-s\ft$ and applying \cref{cor:main-resultant} to the projectivization of the eigenscheme $E_{\T,\ft}(\lambda)$. 

\subsection{Organization of the paper}

The preliminary \cref{sec:prelim} collects useful facts from algebraic geometry and intersection theory.
We give in \cref{lem:master} some properties of the varieties $Y_\ell$ and the projective degrees that will be used in the proof of \cref{cor:main-resultant}.
We also provide a formula in \cref{lem:projective-degrees} for the class of the graph of the rational map associated to $\ff$ in terms of the projective degrees.

In \cref{sec:main} we prove \cref{thm:main-resultant,cor:main-resultant}.
We compute the multiplicity, $\mu(\ff)$, by intersecting the resultant hypersurface with a general line through $\ff$.
We identify the intersection points as the coincidence points of the rational map associated to $\ff$ with a general regular map of degree $d$.
We compute the number of coincidences by intersecting their classes, whose coefficients in the Chow ring are identified with their projective degrees.
The geometric estimate follows by  calculating the degree lost at each intersection to the base locus.

Finally, we devote \cref{sec:eigenvalues} to multiplicities of tensor eigenvalues. 
We obtain \cref{thm:main-tensor} by a straightforward application of the geometric estimate to the eigenscheme.
\cref{cor:span-mult} follows at once using an inequality by Eisenbud and Harris that relates the dimension and degree of an irreducible projective variety to the dimension of its linear span.

\section{Preliminaries}
\label{sec:prelim}

Throughout, we work over the complex field $\CC$.
We set \[
R\coloneqq \CC[x_1,\ldots,x_n]
\] and denote by $R_d$ its degree-$d$ homogeneous component. 
We denote by $
\PP^{n-1} \coloneqq \PP(\CC^n)
$ the projective space of dimension $n-1$.

\subsection{Multiplicity of a polynomial at a point}

Let $m\geq 1$ and $F\in \CC[x_1,\dots,x_m]$ be a polynomial.
Its multiplicity at a point $p\in\CC^m$, denoted $\mult_p(F)$, is the least degree of a non-zero homogeneous term in the Taylor expansion of $F$ at $p$.
The first term of this expansion is $F(p)$, so we have $\mult_p(F)\geq 1$ if and only if $F(p)=0$.

The multiplicity at a point is captured by the multiplicity of $F$ restricted to a generic line through~$p$.
If $h(t)\in\CC[t]$ is a polynomial in one variable, we denote by $\operatorname{ord} h(t)$ the least degree of a non-zero term in $h(t)$, with the convention that $\operatorname{ord}(0)=\infty$.

\begin{lemma}
\label{lem:generic-line}
    For every $q\in\CC^m$ we have \[
    \operatorname{ord} F(p+tq) \geq \mult_{p}(F),
    \] and equality holds for generic $q$.
\end{lemma}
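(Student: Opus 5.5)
We expand $F$ in its Taylor series at $p$ and read off the lowest-order term along the line. Set $k\coloneqq \mult_p(F)$ and write
\[
F(p+x) = \sum_{j\geq k} F_j(x),
\]
where $F_j$ is homogeneous of degree $j$ in $x=(x_1,\dots,x_m)$. By the definition of multiplicity, $F_k\neq 0$. If $F$ is the zero polynomial, then $k=\infty$ and both sides are $\infty$, so we may assume $F\neq 0$; the sum is then finite because $F$ is a polynomial.

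Substitute $x=tq$. Homogeneity gives $F_j(tq)=t^jF_j(q)$, so
\[
F(p+tq)=\sum_{j\geq k} t^j F_j(q).
\]
Every monomial in $t$ on the right has degree at least $k$. Hence $\operatorname{ord}F(p+tq)\geq k$ for every $q\in\CC^m$, which is the inequality. The case $F(p+tq)\equiv 0$, where the order is $\infty$, is covered by the convention $\operatorname{ord}(0)=\infty$.

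For equality, note that the coefficient of $t^k$ is exactly $F_k(q)$. Since $F_k$ is a nonzero polynomial, the set $\{q : F_k(q)\neq 0\}$ is a nonempty Zariski-open, and hence dense, subset of $\CC^m$. For every $q$ in this set the $t^k$ term survives, so $\operatorname{ord}F(p+tq)=k=\mult_p(F)$. This is precisely the meaning of ``generic $q$'' here.

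There is no real obstacle. The only point needing care is to state clearly that ``generic'' means $q$ lies off the hypersurface $\{F_k=0\}$, i.e. outside the tangent cone of $F$ at $p$.
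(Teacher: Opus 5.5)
Your proof is correct and follows the paper's own argument. You expand $F$ at $p$ into homogeneous components, use $F_j(tq)=t^jF_j(q)$, and note that the coefficient of $t^{k}$ is $F_k(q)$, which is nonzero off the proper hypersurface $\{F_k=0\}$. Your separate handling of $F\equiv 0$ and of $F(p+tq)\equiv 0$ is slightly more careful than the paper, which does not address these degenerate cases.
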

\begin{proof}
Let $N\coloneqq \mult_p(F)$.
    The Taylor expansion of $F$ at $p$ gives \[
    F(p+tq)= G_p(tq) + \text{higher degree terms}
    \] where $G_p\neq 0$ is a homogeneous polynomial of degree $N$.
    Hence, $G_p(tq)=t^{N}G_p(q)$.
    For a generic $q\in\CC^m$, we have $G_p(q)\neq 0$ so \[
    \operatorname{ord} F(p+tq) = N,
    \] with $\operatorname{ord} F(p+tq)> N$ if $G_p(q)=0$.
    This finishes the proof.
\end{proof}

\subsection{General linear combinations}

Let $I,J,K\subset R$ be ideals of $R$.
We define the \emph{saturation} of $K$ by $I$ as
\[
K:I^{\infty} \coloneqq \bigcup_{s\geq 1} K:I^s ,
\] where in general $K:J$ denotes the ideal quotient \[
K:J \coloneqq \{f\in R\mid f J \subset K\}.
\] 
If $I$ and $K$ are homogeneous ideals, then so is $K:I^{\infty}$.
Moreover, \[
\Proj\left(R / (K:I^{\infty})\right)
\] is the scheme-theoretic closure of $\Proj(R/K)\setminus \Proj(R/I)$.

Suppose $I\subset R$ is homogeneous and let \[
X\coloneqq \Proj(R/I)
\] be the projective scheme defined by $I$, let $Z$ be an irreducible component of $X$, and $\fp$ be the generic point of $Z$.
Ideal-theoretically, $\fp$ is the homogeneous minimal prime corresponding to $Z$.
The multiplicity of $X$ along $Z$ is defined as the length of the module $\mathcal{O}_{X,\fp}$ over the ring $\mathcal{O}_{\PP^{n-1},\fp}$, \[
\mult_{Z}(X) \coloneqq \length_{\mathcal{O}_{\PP^{n-1},\fp}}\mathcal{O}_{X,\fp},
\] where $\mathcal{O}_{\PP^{n-1},\fp}$ denotes the homogeneous localization of $R$ at the prime ideal $\fp$, i.e., \[
\mathcal{O}_{\PP^{n-1},\fp} \coloneqq R_{(\fp)} = \left\{\, \frac{f}{g}\,\mid\, f,g\in R \text{ homogeneous of the same degree}, g\not\in\fp\right\},
\]
and $\mathcal{O}_{X,\fp}\coloneqq (R/I)_{(\fp/I)}$.
We recall that the length of a module over a ring is the length of the longest chain of submodules.

Now, assume that $I=(f_1,\dots,f_n)$ where all the $f_i$'s are homogeneous of degree $d$, and choose general linear combinations \[
g_\ell = \sum_{j=1}^n a_{\ell j} f_j,\quad 1\leq \ell\leq n-1, \, a_{\ell j}\in \CC.
\] We define for $\ell=0,\dots,n-1$ \[
K_\ell \coloneqq (g_1,\dots,g_\ell),\qquad J_\ell \coloneqq K_\ell : I^\infty,\qquad Y_\ell \coloneqq \Proj(R/J_\ell),\qquad y_\ell\coloneqq\deg Y_\ell
\] with $K_0\coloneqq (0)$.
We will call $y_\ell,\, \ell=0,\dots,n-1$ the \emph{projective degrees}.

We recall that a sequence of polynomials $g_1,\dots,g_r$ in a ring $S$ is called a \emph{regular sequence} if $(g_1,\dots,g_r)\neq S$ and for each $\ell=1,\dots,r$, $g_\ell$ is a nonzerodivisor on $S/(g_1,\dots,g_{\ell-1})$.
See \cite[Chapter~17]{Eisenbud-95}.
If $S$ is a local ring of dimension $r$, with maximal ideal $\mathfrak{m}$, then $g_1,\dots,g_r\in\mathfrak{m}$ is called a \emph{system of parameters} if $\sqrt{(g_1,\dots,g_r)}=\mathfrak{m}$.

\begin{lemma}
\label{lem:master}
    Assume that $I\neq (0)$.
    \begin{enumerate}
        \item \label{it:master1}
        For each $\ell\geq 1$, for which $Y_{\ell-1}\neq\varnothing$, the element $g_\ell$ is a nonzerodivisor on $R/J_{\ell-1}$.
        Moreover, \begin{equation}
        \label{eq:saturation-eq}
        J_\ell = \left( J_{\ell-1}+(g_\ell) \right) : I^\infty.
                \end{equation}
                Consequently, $Y_\ell$ is the closure of $(Y_{\ell-1}\cap \cZ_{\PP}(g_\ell))\setminus X$.
        \item \label{it:master2}
        If $\operatorname{ht}(I)\geq r$ (equivalently, if $\codim X\geq r$), then $g_1,\dots,g_r$ is a regular sequence.
        In this case, we have $y_\ell=d^\ell$ for every $\ell< r$.
        \item \label{it:master3}
        Let $Z$ be an irreducible component of $X$ of codimension $r$, let $\fp$ be its generic point, and \[
        S \coloneqq \mathcal{O}_{\PP^{n-1},\fp}
        \] be the corresponding local ring.
        Then, $S$ is a regular local ring of dimension $r$ with $g_1,\dots,g_r$ forming a regular sequence, and a system of parameters.
        Moreover, for every $0\leq j<r$  \[
        J_j S = (g_1,\dots,g_j) S.
        \]
        Consequently, the multiplicity of $Y_{r-1}\cap\cZ_{\PP}(g_r)$ along $Z$ is  \[
        \lambda\coloneqq \length_{S}\left(S/(g_1,\dots,g_r)S\right) < \infty,
        \] and \[
        \lambda \geq \mult_Z(X).
        \]
    \end{enumerate}
\end{lemma}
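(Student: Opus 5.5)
The plan is to prove the three items in order, with the standard prime-avoidance argument for general linear combinations driving item \ref{it:master1}.

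For item \ref{it:master1}, suppose $Y_{\ell-1}\neq\varnothing$. Since $J_{\ell-1}=J_{\ell-1}:I^\infty$ is saturated with respect to $I$, no associated prime of $R/J_{\ell-1}$ contains $I$. There are finitely many such associated primes $\fp_1,\dots,\fp_m$. For each $i$, the set of coefficient vectors $(a_{\ell 1},\dots,a_{\ell n})$ with $\sum_j a_{\ell j}f_j\in\fp_i$ is a proper linear subspace, because some $f_j\notin\fp_i$. A general $g_\ell$ therefore avoids every $\fp_i$, so it is a nonzerodivisor on $R/J_{\ell-1}$. Note that $g_\ell$ is chosen after $g_1,\dots,g_{\ell-1}$, and the genericity conditions are finitely many nonempty Zariski open conditions, so they can be imposed simultaneously.

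For \eqref{eq:saturation-eq}, the inclusion $\supseteq$ follows from $K_\ell\subset J_{\ell-1}+(g_\ell)$. For the reverse inclusion, it suffices to show that $J_{\ell-1}+(g_\ell)\subset K_\ell:I^\infty$; saturating both sides then gives the claim. Take $u\in J_{\ell-1}$, so $uI^s\subset K_{\ell-1}\subset K_\ell$ for some $s$. Hence $J_{\ell-1}+(g_\ell)\subset K_\ell:I^\infty$. The statement about closures then follows by taking $\Proj$.

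For item \ref{it:master2}, the argument is by induction on $\ell\le r$. If $\operatorname{ht}I\ge r$ and $\ell<r$, then $\operatorname{ht}K_\ell\le\ell<r$ by Krull's theorem. Assuming inductively that $K_{\ell}$ is unmixed of height $\ell$ (it is a complete intersection in the Cohen--Macaulay ring $R$), no associated prime of $K_\ell$ contains $I$. Hence $K_\ell=J_\ell$, and the prime-avoidance argument above shows that $g_{\ell+1}$ is a nonzerodivisor on $R/K_\ell$. This yields a regular sequence $g_1,\dots,g_r$. Then $y_\ell=\deg(R/K_\ell)=d^\ell$ for $\ell<r$ by Bézout, since $K_\ell$ is a complete intersection of $\ell$ forms of degree $d$.

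For item \ref{it:master3}, the ring $S=R_{(\fp)}$ is regular because $\PP^{n-1}$ is smooth, and it has dimension $\operatorname{ht}\fp=r$. In $S$, the ideal $IS$ is $\fp S$-primary, since $Z$ is a component. To handle $J_j$, I would localize item \ref{it:master2}'s argument at $\fp$. Its associated primes have height $j<r$, so they do not contain $I$ (any prime containing $I$ and contained in $\fp$ equals $\fp$ by minimality, and $\fp$ has height $r>j$), so $(g_1,\dots,g_j)S$ is already saturated with respect to $IS$, and $J_jS=(g_1,\dots,g_j)S$. Here saturation commutes with localization because $I$ is finitely generated.

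The step I expect to be the main obstacle is showing that $g_1,\dots,g_j$ is a regular sequence in $S$. I would handle it by generic choice, intersecting the finitely many open conditions over the finitely many components: choose each $g_{j+1}$ outside all minimal primes of $(g_1,\dots,g_j)$ that are contained in $\fp$. This is possible because such primes have height $j<r$ and so do not contain $I$. Since $S$ is Cohen--Macaulay, this makes $g_1,\dots,g_r$ a regular sequence, and then $\sqrt{(g_1,\dots,g_r)S}=\fp S$ by dimension, so they form a system of parameters.

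Consequently, $Y_{r-1}\cap\cZ_{\PP}(g_r)$ has local ring $S/(J_{r-1}+g_r)S=S/(g_1,\dots,g_r)S$ at $\fp$, so its multiplicity along $Z$ is $\lambda$. This length is finite because the ideal is $\fp S$-primary. Finally, $(g_1,\dots,g_r)S\subset IS$ gives a surjection $S/(g_1,\dots,g_r)S\to S/IS=\mathcal{O}_{X,\fp}$, hence $\lambda\ge\mult_Z(X)$.
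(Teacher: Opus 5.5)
Your proposal is correct and follows essentially the same route as the paper. Item 1 uses prime avoidance over the associated primes of the $I$-saturated ideal $J_{\ell-1}$. Item 2 runs an induction using unmixedness of complete intersections in a Cohen--Macaulay ring, and item 3 repeats that argument in $S$ and finishes with the surjection $S/(g_1,\dots,g_r)S\twoheadrightarrow S/IS$. Two minor points: the inclusion you get from $K_\ell\subset J_{\ell-1}+(g_\ell)$ is $J_\ell\subseteq (J_{\ell-1}+(g_\ell)):I^\infty$, so your labels $\subseteq$/$\supseteq$ are swapped; and in item 3 the claim that the associated primes of $(g_1,\dots,g_j)S$ have height $j$ depends on the regular-sequence induction, so it should come after that induction, not before.
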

\begin{proof}
By \cite[Exercise~17.3]{Eisenbud-95}, the set of zerodivisors of $R/J_{\ell-1}$ coincides with the union of associated primes of $R/J_{\ell-1}$.
    Since $J_{\ell-1}$ is $I$-saturated, the associated primes of $R/J_{\ell-1}$ cannot contain $I$.
    Hence, for an associated prime $\mathfrak{q}$ of $R/J_{\ell-1}$, the collection of coefficients $(a_1,\dots,a_n)\in\CC^n$ with $\sum_{i=1}^n a_i f_i\in \mathfrak{q}$ form a proper linear subspace of $\CC^n$.
    Since $R/J_{\ell-1}$ have only finitely many associated primes, a general linear combination of the $f_i$'s avoids all associated primes, and hence it is a nonzerodivisor.
    \Cref{eq:saturation-eq} follows from \[
    J_\ell = K_\ell : I^{\infty} = (K_{\ell-1}+(g_\ell)):I^{\infty} = (J_{\ell-1} + (g_\ell)) : I^{\infty}
    \] since $J_{\ell-1}=K_{\ell-1}:I^{\infty}$.
    This proves the first assertion.

    We now prove the second assertion.
    Let $1\leq\ell \leq r$.
    For an associated prime $\mathfrak{q}$ of $R/K_{\ell-1}$, we have $\mathrm{ht}(\mathfrak{q})=\codim R/K_{\ell-1}=\ell-1$.
    On the other hand, $\mathrm{ht}(I)\geq r$ so we have $\mathrm{ht}(\mathfrak{q})< \mathrm{ht}(I)$.
    This shows that $I\not\subset \mathfrak{q}$.
    As in the proof of the first assertion, a general linear combination of the $f_i$'s avoids all associated primes of $R/K_{\ell-1}$, and hence it is a nonzerodivisor.
    This shows that $g_1,\dots,g_r$ is a regular sequence.
    The assertion $y_\ell=d^{\ell}$ then follows from the B\'{e}zout theorem since $K_\ell=J_\ell$ and $Y_\ell=\Proj(R/K_{\ell})$ for $\ell < r$.

    We now prove the third assertion.
    Since $R$ is a polynomial ring, its localizations at prime ideals are regular local rings (\cite[Corollary~19.14]{Eisenbud-95}).
    This shows that $S$ is a regular local ring.
    Since $Z$ is an irreducible component of $X$, the prime $\fp$ is minimal over $I$ in $R$.
    Therefore, \[
    \sqrt{IS} = \fp S.
    \] In particular, $IS$ is $\fp S$-primary, and \[
    \length_S (S/IS) = \mult_Z(X).
    \] 
    
    We now prove by induction that $g_1,\dots,g_r$ form a regular sequence.
    The proof is similar to the proof of the second assertion:
    Let $1\leq j \leq r$, and assume that $g_1,\dots,g_{j-1}$ form a regular sequence.
    Let $S'\coloneqq S/(g_1,\dots,g_{j-1})S$, and $\mathfrak{q}$ be an associated prime of $S'$.
    Since $S$ is Cohen--Macaulay and $g_1,\dots,g_{j-1}$ is a regular sequence, we have $\dim S'=\dim S-j+1=r-j+1>0$.
    Hence, $\fp S'$ is not an associated prime of $S'$, and $\mathfrak{q}\neq \fp S'$.
    Since $\sqrt{IS'}=\fp S'$, we have $IS'\not\subset \mathfrak{q}$.
    It follows that the images of the $f_i$'s in $S'$ do not all lie in $\mathfrak{q}$.
    As in the proof of the first assertion, we deduce that a general linear combination of $f_i$'s avoids all associated primes of $S'$.
    This proves that $g_j$ is a nonzerodivisor on $S'$.

Hence, $g_1,\dots,g_r$ form a regular sequence.
    Since $\dim S = r$, its quotient with $g_1,\dots,g_r$ gives \begin{equation}
   \label{eq:zero-dim}
    \dim(S/ (g_1,\dots,g_r)S)=0.
        \end{equation} Thus, $\sqrt{S(g_1,\dots,g_r)}=\fp S$, so $g_1,\dots,g_r$ form a system of parameters.

Let $1\leq j\leq r$.
We already proved that no associated prime of $S'=S/(g_1,\dots,g_{j-1})S$ contains $IS'$.
It follows that the ideal $S(g_1,\dots,g_j)$ is $IS$-saturated: \[
J_j S = ((g_1,\dots,g_j) : I^{\infty}) S = (g_1,\dots,g_j) S :(IS)^{\infty} = (g_1,\dots,g_j) S.
\]
In particular, for $j=r-1$, the ideal of $Y_{r-1}\cap\cZ_{\PP}(g_r)$ in $S$ is \[
(J_{r-1}+(g_r))S = (g_1,\dots,g_r)S.
\]
By \cref{eq:zero-dim}, $Z$ is an irreducible component of this intersection with multiplicity \[
\mult_Z( Y_{r-1}\cap\cZ_{\PP}(g_r) ) = \length_{S}(S/(g_1,\dots,g_r)S).
\]

We now prove $\lambda\geq \mult_Z(X)$.
    Since $g_1,\dots,g_r\in I$, there is a surjection \[
    S/(g_1,\dots,g_r)S \twoheadrightarrow S/IS.
    \] Taking lengths, we get \[
    \lambda = \length_{S}(S/(g_1,\dots,g_r)S) \geq \length_S S/IS = \mult_Z(X).
    \]
\end{proof}

\begin{corollary}
\label{lem:regular-degrees}
If $X=\varnothing$, then $y_\ell=d^{\ell}$ for every $\ell=0,\dots,n-1$.
\end{corollary}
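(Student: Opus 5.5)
The plan is to reduce directly to \cref{lem:master}, part~\ref{it:master2}. If $X=\varnothing$, then $\Proj(R/I)$ is empty. For a homogeneous ideal this means $I$ is irrelevant, i.e.\ $\sqrt{I}\supseteq (x_1,\dots,x_n)$. Hence $\operatorname{ht}(I)=n$, which we read as $\codim X\geq n$ under the usual convention that the empty scheme has codimension larger than $n-1$. In particular $I\neq (0)$, so the lemma applies.

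Taking $r=n$ in part~\ref{it:master2}, the sequence $g_1,\dots,g_n$ is regular, if we allow a hypothetical $n$-th general combination. In any case $g_1,\dots,g_{n-1}$ is regular, and the lemma gives $y_\ell=d^\ell$ for every $\ell<n$, i.e.\ for $\ell=0,\dots,n-1$. This is exactly the claim.

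The only point needing a short remark is that the definition of the $g_\ell$ only goes up to $\ell=n-1$. Nothing is lost: the regularity argument in the proof of part~\ref{it:master2} is inductive in $\ell$, so it applies verbatim for $1\leq \ell\leq n-1$ whenever $\operatorname{ht}(I)>\ell-1$.

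To keep the argument self-contained, we note how Bézout enters. For each $\ell\leq n-1$, the associated primes of $R/K_{\ell}$ have height $\ell<n=\operatorname{ht}(I)$, so none of them contains $I$. Hence $K_\ell$ is already $I$-saturated, and $J_\ell=K_\ell$. Then $Y_\ell$ is a complete intersection of $\ell$ forms of degree $d$, so $\deg Y_\ell=d^\ell$.

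The case $\ell=0$ is trivial: $Y_0=\PP^{n-1}$ because $J_0=(0):I^\infty=(0)$, and its degree is $1=d^0$. We expect no real obstacle here; the only care needed is the translation from ``empty zero scheme'' to ``$\operatorname{ht}(I)=n$''.
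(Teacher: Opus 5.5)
Your proposal is correct and follows the paper's proof essentially verbatim: $X=\varnothing$ forces $\operatorname{ht}(I)=n$, then \cref{lem:master}~(\ref{it:master2}) gives $y_\ell=d^\ell$ for $1\leq\ell\leq n-1$, and the case $\ell=0$ follows from $Y_0=\PP^{n-1}$. Your extra remark is a small point of care that the paper leaves implicit: only $g_1,\dots,g_{n-1}$ are needed, so applying the lemma with $r=n$ does not require a nonexistent $g_n$.
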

\begin{proof}
We have $\mathrm{ht}(I)=n=\dim(R)$. 
Hence, \cref{lem:master}~(\ref{it:master2}) yields $y_\ell=d^\ell$ for every $1\leq \ell\leq n-1$.
For $\ell=0$, we have $Y_0=\PP^{n-1}$ so $y_0=1=d^{0}$.
\end{proof}

\subsection{Classes of graphs of rational maps in the Chow ring}

We now give a brief introduction to intersection theory. 
One of our aims is to prove a formula (\cref{lem:projective-degrees}) for the projective degrees, $y_\ell$, as the coefficients of the class of the graph in the \emph{Chow ring} of $\PP^{n-1}\times\PP^{n-1}$ of the rational map associated to $\ff$.
Our main reference in this section is the first two chapters of \cite{Eisenbud-Harris-16}.

Informally, the \emph{Chow group} $A(X)$ of a scheme $X$ is the set of all formal linear combinations of the equivalence classes $[Y]$ of subvarieties (irreducible, reduced subschemes) $Y\subset X$, modulo the rational equivalence.
Two subvarieties $Y_1,Y_2$ are called \emph{rationally equivalent} if there is a subvariety of $\PP^{1}\times X$ whose fibers over two distinct points are $\{t_0\}\times Y_1$ and $\{t_1\}\times Y_2$.
See \cite[Definitions~1.2 and 1.3]{Eisenbud-Harris-16}.

There is a multiplication operation on the Chow group that gives it a ring structure, called the Chow ring.
To define it, we need the following definitions:
Two subvarieties $Y_1,Y_2\subset X$ are said to intersect \emph{transversely} at a point $p\in Y_1\cap Y_2$ if $p$ is a smooth point of $X,Y_1,Y_2$ and the tangent spaces of $Y_1$ and $Y_2$ at $p$ span the tangent space of $X$: \[
T_p(Y_1) + T_p(Y_2) = T_p(X).
\] Two subvarieties are called \emph{generically transverse} if they intersect transversely at a generic point of each irreducible component of $Y_1\cap Y_2$.

\begin{theorem}[{\cite[Theorem-Definition~1.5]{Eisenbud-Harris-16}}]
\label{thm:transversal-intersection}
    Let $X$ be a smooth quasi-projective variety. 
    Then there is a unique product structure on $A(X)$ satisfying the condition:

    If two subvarieties $Y_1,Y_2\subset X$ are generically transverse, then \[
    [Y_1][Y_2] = [Y_1\cap Y_2].
    \] 
    This structure makes $A(X)$ into an associative, commutative ring, called the \emph{Chow ring} of $X$.
\end{theorem}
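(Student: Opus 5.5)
This statement is quoted from \cite[Theorem-Definition~1.5]{Eisenbud-Harris-16}, and the paper uses it as a black box. If I had to prove it, I would split it into a uniqueness part and an existence part. For existence I would follow Fulton's approach via the diagonal and the deformation to the normal cone, rather than any approach based on moving cycles.

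\emph{Uniqueness.} The plan is to invoke a moving lemma. On a smooth quasi-projective variety $X$, given classes $\alpha,\beta\in A(X)$, I would represent them by cycles $\sum m_i[Y_i]$ and $\sum n_j[W_j]$ such that every pair $Y_i,W_j$ is generically transverse. For quasi-projective $X$ I would do this by embedding $X$ in projective space and moving one cycle by a general element of a group acting on a suitable ambient space; Kleiman's transversality theorem then gives the transversality. Bilinearity of the product, which is implicit in the word ``ring'', then forces
\[
\alpha\beta=\sum m_in_j[Y_i\cap W_j].
\]
So at most one product can satisfy the condition.

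\emph{Existence.} I would define the product as $\alpha\beta\coloneqq\Delta^{!}(\alpha\times\beta)$. Here $\alpha\times\beta\in A(X\times X)$ is the exterior product, which is well defined on rational equivalence classes by a direct check. The map $\Delta^{!}$ is the refined Gysin pullback along the regular embedding $\Delta:X\to X\times X$, which is regular because $X$ is smooth. It is constructed by specializing $[Y_1\times Y_2]$ to the normal cone $C_{(Y_1\times Y_2)\cap\Delta}(Y_1\times Y_2)\subset N_\Delta|_{Y_1\cap Y_2}$ and then intersecting with the zero section via the inverse of the flat-pullback isomorphism $A(Y_1\cap Y_2)\cong A(N)$. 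The properties would then come in this order:
\begin{itemize}
\item Commutativity follows from the symmetry of $\Delta$ under the swap of factors.
\item Associativity follows from the functoriality of Gysin maps applied to the two factorizations of $X\to X\times X\times X$.
\item Well-definedness on rational equivalence classes follows because $\Delta^{!}$ commutes with rational equivalence on $X\times X$.
\end{itemize}

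\emph{Main obstacle.} I expect the key step to be verifying the normalization $[Y_1][Y_2]=[Y_1\cap Y_2]$ for generically transverse subvarieties. I would reduce to generic points of each component $Z$ of $Y_1\cap Y_2$. There, transversality means $Y_1\times Y_2$ meets $\Delta$ in the expected codimension, and the intersection is reduced, since the tangent spaces span $T_pX$. Hence the normal cone equals the restricted normal bundle with multiplicity one along $Z$, and the Gysin map returns $[Z]$ with coefficient $1$. Making this local multiplicity computation rigorous, including the claim that no excess or embedded contributions appear, is the step I would check most carefully.
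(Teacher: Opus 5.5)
The paper gives no proof of this statement; it is quoted from \cite[Theorem-Definition~1.5]{Eisenbud-Harris-16} and used as a black box. Your existence half is the standard construction from Fulton's \emph{Intersection Theory}, and it is sound. Commutativity, associativity and compatibility with rational equivalence are formal properties of refined Gysin maps. The step you flag as the main obstacle is Fulton's criterion: for subvarieties $Y_1,Y_2$ of a smooth $X$ meeting properly, a component $Z$ of $Y_1\cap Y_2$ has intersection multiplicity $1$ if and only if $I_{Y_1}+I_{Y_2}$ generates the maximal ideal of $\mathcal{O}_{X,Z}$. Generic transversality gives exactly this. It also forces every component of $Y_1\cap Y_2$ to have the expected dimension, so neither excess nor embedded components can contribute.

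The genuine gap is in the uniqueness half. Uniqueness rests entirely on the moving lemma: any two classes are represented by cycles whose components are pairwise generically transverse. The route you sketch for it does not work. Kleiman's transversality theorem needs a group acting transitively on $X$ itself. A group acting on an ambient $\PP^N\supset X$ does not preserve $X$, so $gY$ is not a cycle on $X$, and $gY\cap X$ has dimension $\dim Y+\dim X-N$ rather than $\dim Y$.

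No argument that translates effective cycles can succeed in general. On the blow-up $\pi$ of $\PP^2$ at a point, the exceptional curve $E$ has $[E]^2=-1$. An effective cycle in the class $[E]$ that is generically transverse to $E$ would instead give a nonnegative count of intersection points. You are forced to use representatives with negative coefficients, such as $[E]=[\pi^{*}\ell']-[\widetilde{\ell}]$, where $\ell'$ is a line missing the point and $\widetilde{\ell}$ is the strict transform of a line through it.

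The missing idea is Chow's moving lemma in the form proved by Roberts:
\begin{enumerate}
\item For a general linear center $L\subset\PP^N$ of dimension $N-\dim X-1$, write $Y=C_L(Y)\cdot X-R$.
\item Move the cone $C_L(Y)$ by a general element of $\mathrm{PGL}_{N+1}$; this is the only place a Kleiman-type argument enters.
\item Prove the key lemma that the components of the residual cycle $R$ meet the fixed cycle with strictly smaller excess than $Y$ does, and induct on the excess.
\item In characteristic $0$, check that the resulting intersections are generically transverse and not merely proper.
\end{enumerate}
Without this, or an explicit appeal to the moving lemma as a cited theorem, your argument proves that a product with the required property exists, but not that it is unique.
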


It is known that \[
A(\PP^{n-1}) = \ZZ[\alpha]/ (\alpha^n),
\] where $\alpha=[H]$ denotes the class of a hyperplane $H\subset\PP^{n-1}$.
Moreover, if $Y\subset\PP^{n-1}$ is a subvariety of degree $d$ and codimension $k$, then we have \[
[Y] = d[H]^k.
\] See \cite[Theorem~2.1]{Eisenbud-Harris-16} for both assertions.

We will need to make calculations in the Chow ring of the product of projective spaces. 
\begin{lemma}[{\cite[Theorem~2.10]{Eisenbud-Harris-16}}]
    \label{lem:chow-ring}
    We have \[
    A(\PP^{n-1}\times\PP^{n-1})\cong \ZZ[\alpha,\beta] / (\alpha^{n},\beta^n),
    \] where $\alpha = [H_1] = [H'_1\times\PP^{n-1}]$ and $\beta=[H_2]=[\PP^{n-1}\times H'_2]$ denote the classes of hyperplanes in the first and the second factor of $\PP^{n-1}\times\PP^{n-1}$, respectively.
    Moreover, the class of a hypersurface defined by a bihomogeneous polynomial of bidegree $(d,e)$ is $d\alpha+e\beta$.
\end{lemma}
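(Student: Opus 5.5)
The plan is to follow the standard argument for products of projective spaces in \cite[Chapter~2]{Eisenbud-Harris-16}. It has three parts: find generators of the group $A(\PP^{n-1}\times\PP^{n-1})$, prove they are independent, and then read off the ring structure and the class of a bihomogeneous hypersurface.

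\emph{Generators.} Fix complete flags of linear subspaces $L_0\subset L_1\subset\cdots\subset L_{n-1}=\PP^{n-1}$ in each factor. The differences $L_i\setminus L_{i-1}\cong\mathbb{A}^i$ stratify $\PP^{n-1}$, so the products $(L_i\setminus L_{i-1})\times(L_j\setminus L_{j-1})\cong\mathbb{A}^{i+j}$ give an affine stratification of $\PP^{n-1}\times\PP^{n-1}$. The general fact for affine stratifications \cite[Proposition~1.17]{Eisenbud-Harris-16} then says that $A(\PP^{n-1}\times\PP^{n-1})$ is generated by the closures of the strata, namely the classes $[L_i\times L_j]$. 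It uses the excision sequence $A(Z)\to A(X)\to A(X\setminus Z)\to 0$ together with $A(\mathbb{A}^k)=\ZZ\cdot[\mathbb{A}^k]$, applied by induction on the strata.

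\emph{Identifying the generators.} Let $\pi_1,\pi_2$ be the two projections. A general hyperplane $H_1'\times\PP^{n-1}$ meets $L_i\times L_j$ transversely in $L_{i-1}'\times L_j$, and the same holds in the second factor. Hence, by \cref{thm:transversal-intersection}, $[L_i\times L_j]=\alpha^{n-1-i}\beta^{n-1-j}$.

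\emph{Independence.} Intersecting the general translate $L_i\times L_j$ with a general $L_{i'}\times L_{j'}$ with $i+j+i'+j'=2(n-1)$ gives a transverse intersection. It is a single point when $i'=n-1-i$ and $j'=n-1-j$, and empty otherwise. Thus the degree pairing on the generators is the identity matrix after reindexing. This shows that the monomials $\alpha^a\beta^b$ with $0\le a,b\le n-1$ are linearly independent over $\ZZ$, so $A$ is free with this basis.

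\emph{Ring structure.} The relation $\alpha^n=0$ holds because $n$ general hyperplanes in $\PP^{n-1}$ have empty intersection, so the corresponding intersection in the first factor is empty and transverse; likewise $\beta^n=0$. Together with the basis found above, this gives the isomorphism with $\ZZ[\alpha,\beta]/(\alpha^n,\beta^n)$.

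\emph{Hypersurface class.} Let $F$ be bihomogeneous of bidegree $(d,e)$ and let $G$ be a product of $d$ general linear forms in $x$ and $e$ general linear forms in $y$. Consider the pencil $\{sF+tG=0\}\subset\PP^1\times\PP^{n-1}\times\PP^{n-1}$. Its fibre over $[1:0]$ is the hypersurface $\cZ(F)$, counted with the multiplicities of its components, and its fibre over $[0:1]$ is $\cZ(G)$. This gives a rational equivalence $[\cZ(F)]=[\cZ(G)]$; equivalently, one can use the rational function $F/G$. The union $\cZ(G)$ of $d$ hyperplanes pulled back from the first factor and $e$ from the second has class $d\alpha+e\beta$.

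I expect the main obstacle to be the generation step. It depends on the excision sequence and on the fact that affine spaces have rank-one Chow groups, and I would quote both from \cite{Eisenbud-Harris-16} rather than reprove them. Everything after that reduces to transversality of general linear subspaces.
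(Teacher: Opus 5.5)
Your proposal is correct and is essentially the standard proof of \cite[Theorem~2.10]{Eisenbud-Harris-16}, which the paper cites rather than proves: an affine stratification by products of flags for generation, transverse intersections to identify $[L_i\times L_j]=\alpha^{n-1-i}\beta^{n-1-j}$ and to get the relations $\alpha^n=\beta^n=0$, the intersection pairing for independence, and a rational function $F/G$ with $G$ a product of linear forms for the bidegree statement. The one ingredient you use implicitly is that the degree map on zero-cycles of the projective variety $\PP^{n-1}\times\PP^{n-1}$ is well defined on rational equivalence classes, which is what makes the independence-by-pairing step valid.
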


Let $L_1,L_2\subset\PP^{n-1}$ be linear subspaces.
Then $L_1\times L_2$ is a subvariety of $\PP^{n-1}\times\PP^{n-1}$.
The second assertion of \cref{lem:chow-ring} implies that if $L_1$ and $L_2$ have codimensions $r$ and $s$, respectively, then \[
[L_1\times L_2] = [H_1]^{r} [H_2]^s.
\]

Let $0\neq\ff\in R_d^n$, and $\phi_{\ff}$ be the corresponding rational map, defined as \[
\phi_{\ff} : \PP^{n-1}\dashrightarrow \PP^{n-1},\quad [p]\mapsto [ f_1(p):\cdots : f_n(p)].
\]
It is regular on the open set $U\coloneqq \PP^{n-1}\setminus X$ where $X$ denotes the projective zero scheme of $\ff$.

Let us denote by $G_{\ff}$ the graph of $\phi_{\ff}$ and
by $\Gamma_{\ff}$ its closure in $\PP^{n-1}\times\PP^{n-1}$, namely, \[
G_{\ff}\coloneqq \left\{\,\left([p],\phi_{\ff}([p])\right)\, \mid \, [p]\in U\right\},\qquad
\Gamma_{\ff} \coloneqq \overline{G_{\ff}}.
\]
We identify $[\Gamma_{\ff}]$ in $A(\PP^{n-1}\times\PP^{n-1})$ in terms of the projective degrees:
\begin{lemma}
\label{lem:projective-degrees}
If $\ff\neq 0$, then
 $G_{\ff}$ and $\Gamma_{\ff}$ are irreducible of dimension $n-1$ and $\dim (\Gamma_{\ff}\setminus G_{\ff})\leq n-2$.
    Moreover, we have \begin{equation}
   \label{eq:class-formula}
    [\Gamma_{\ff}] = \sum_{\ell=0}^{n-1} \, y_\ell \, [H_1]^{\ell} \, [H_2]^{n-\ell-1}.
        \end{equation}
\end{lemma}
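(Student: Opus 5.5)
The proof has two parts: irreducibility and dimension of the graph, then the class formula. For the first part, note that $U=\PP^{n-1}\setminus X$ is a nonempty open subset of the irreducible variety $\PP^{n-1}$. It is nonempty because $\ff\neq 0$ forces $X\neq\PP^{n-1}$. The graph $G_{\ff}$ of the regular map $\phi_{\ff}|_U$ is isomorphic to $U$ via the first projection. Hence $G_{\ff}$ is irreducible of dimension $n-1$, and so is its closure $\Gamma_{\ff}$. Since $G_{\ff}=\Gamma_{\ff}\cap(U\times\PP^{n-1})$ is open and dense in $\Gamma_{\ff}$, its complement is a proper closed subset of an irreducible $(n-1)$-dimensional variety. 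Therefore $\dim(\Gamma_{\ff}\setminus G_{\ff})\le n-2$.

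For the class formula, $\Gamma_{\ff}$ has dimension $n-1$, so its class lies in codimension $n-1$ of $A(\PP^{n-1}\times\PP^{n-1})=\ZZ[\alpha,\beta]/(\alpha^n,\beta^n)$. Write
\[
[\Gamma_{\ff}]=\sum_{\ell=0}^{n-1} c_\ell\,\alpha^\ell\beta^{n-1-\ell}.
\]
The monomials $\alpha^{n-1-\ell}\beta^{\ell}$ form the dual basis under the pairing into $\alpha^{n-1}\beta^{n-1}$, the class of a point. So
\[
c_\ell=\deg\bigl([\Gamma_{\ff}]\cdot\alpha^{n-1-\ell}\beta^{\ell}\bigr).
\]
I would compute this degree by intersecting $\Gamma_{\ff}$ with $M\times\Lambda$, where $M$ is a general linear subspace of codimension $n-1-\ell$ in the first factor and $\Lambda$ is a general linear subspace of codimension $\ell$ in the second factor. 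Take $\Lambda=\cZ_{\PP}(\ell_1,\dots,\ell_\ell)$ for general linear forms $\ell_i$ in the target coordinates. Then $\Gamma_{\ff}\cap(\PP^{n-1}\times\Lambda)$, restricted over $U$, is isomorphic via the first projection to $\cZ_{\PP}(g_1,\dots,g_\ell)\setminus X$, where $g_i=\ell_i(\ff)$ are general linear combinations of the $f_j$.

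The main obstacle is controlling the boundary $\Gamma_{\ff}\setminus G_{\ff}$. I would use Kleiman transversality for the transitive action of $PGL_n\times PGL_n$ on $\PP^{n-1}\times\PP^{n-1}$. It guarantees that a general translate $M\times\Lambda$ meets $\Gamma_{\ff}$ generically transversely, in finitely many points. It also guarantees that $M\times\Lambda$ misses $\Gamma_{\ff}\setminus G_{\ff}$, since that set has dimension at most $n-2$ while $M\times\Lambda$ has codimension $n-1$. So all intersection points lie over $U$, are reduced, and
\[
c_\ell=\#\bigl(M\cap(\cZ_{\PP}(g_1,\dots,g_\ell)\setminus X)\bigr).
\]
By \cref{lem:master}, the scheme $Y_\ell$ is the closure of $\cZ_{\PP}(g_1,\dots,g_\ell)\setminus X$. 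It is reduced wherever it is transverse to $M$, and $M$ is a general linear space of complementary dimension. Therefore this count equals $\deg Y_\ell=y_\ell$, provided $M$ avoids $Y_\ell\cap X$, which has smaller dimension.

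One subtlety is reducedness. Off $X$, the map from $\Gamma_{\ff}\cap(\PP^{n-1}\times\Lambda)$ to $\cZ(g_1,\dots,g_\ell)\setminus X$ is a scheme isomorphism. Generic transversality upstairs then gives the count as a length, which matches the degree counted with multiplicity. I would also check the edge cases. When $\ell=0$, we have $y_0=1$ and $\beta^{n-1}$ is a fiber class. When $Y_\ell=\varnothing$, both sides are zero.
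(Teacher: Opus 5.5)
Your proposal is correct and follows essentially the same route as the paper. The shared steps are: $G_{\ff}\cong U$ for the first part; extracting $c_\ell$ by pairing $[\Gamma_{\ff}]$ with $[H_1]^{n-\ell-1}[H_2]^{\ell}$; choosing $L_1\times L_2$ to miss the boundary $\Gamma_{\ff}\setminus G_{\ff}$; and identifying $G_{\ff}\cap(L_1\times L_2)$ with $L_1\cap(\cZ_{\PP}(g_1,\dots,g_\ell)\setminus X)$ via the first projection. Your appeal to Kleiman transversality for $PGL_n\times PGL_n$, and your remark that this identification is a scheme isomorphism over $U$, make explicit the general-position and reducedness points that the paper asserts without further justification.
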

\begin{proof}
$G_{\ff}$ is isomorphic to $U$ via the map $[p]\mapsto ([p],\phi_{\ff}([p]))$, whose inverse is the coordinate projection onto the first coordinate.
Hence, $G_{\ff}$ is irreducible and has dimension $n-1$.
Since $\Gamma_{\ff}$ is the closure of $G_{\ff}$, it is also irreducible of dimension $n-1$ and \[
    \dim (\Gamma_{\ff}\setminus G_{\ff})\leq n-2.
    \]

    We now prove \Cref{eq:class-formula}.
    Since $\Gamma_{\ff}$ is irreducible and has dimension $n-1$, we have \[
    [\Gamma_{\ff}] = \sum_{\ell=0}^{n-1} \, c_\ell \, [H_1]^{\ell}\, [H_2]^{n-\ell-1}
    \] for some coefficients $c_\ell\in\ZZ$.
    By \cref{lem:chow-ring}, we can calculate $c_\ell$ as \[
    [\Gamma_{\ff}][H_1]^{n-\ell-1}[H_2]^\ell = c_\ell [H_1]^{n-1} [H_2]^{n-1}.
    \]
    We have $[H_1]^{n-\ell-1}[H_2]^{\ell}=[L_1\times L_2]$ where $L_1\subset\PP^{n-1}$ is a linear subspace of codimension $n-\ell-1$ and $L_2\subset\PP^{n-1}$ is a linear subspace of codimension $\ell$.
    Hence, by \cref{thm:transversal-intersection}, we have \[
    c_\ell = \deg\left(\, \Gamma_{\ff} \cap (L_1\times L_2)\right)
    \] when $L_1$ and $L_2$ are general linear subspaces of codimensions $n-\ell-1$ and $\ell$, respectively.

   Since $\dim (\Gamma_{\ff}\setminus G_{\ff})\leq n-2<\dim\Gamma_{\ff}$, a general subspace $L_1\times L_2$ intersects $\Gamma_{\ff}$ at $G_{\ff}$:\[
   \Gamma_{\ff}\cap (L_1\times L_2) = G_{\ff}\cap (L_1\times L_2).
   \]
   Now, the coordinate projection onto the first factor gives an isomorphism \[
   G_{\ff}\cap (L_1\times L_2) \cong U\cap L_1 \cap \phi_{\ff}^{-1}(L_2).
   \]
   Since $L_1$ is generic, we have \[
   c_\ell = \deg\left(\overline{U\cap \phi_{\ff}^{-1}(L_2)}\right).
   \]
   On the other hand, the variety on the right-hand side is precisely $Y_\ell$, so we have \[
   c_\ell=y_\ell
   \] and this finishes the proof.
\end{proof}

\section{Multiplicity of the resultant}
\label{sec:main}

\subsection{Formula via projective degrees}

In this section we prove \cref{thm:main-resultant}.
The proof operates as follows:
We calculate the resultant multiplicity by intersecting $\cR$ with a line through $\ff$ and a general system $\hh$ with $\Res(\hh)\neq 0$.
We identify the intersection points with the coincidence points of the maps $\phi_{\ff}$ and $\phi_{\hh}$.
These points arise from the intersection of the graphs of $\phi_{\ff}$ and $\phi_{\hh}$, and a calculation in the Chow ring of $\PP^{n-1}\times\PP^{n-1}$ yields a formula for the degree of their intersection in terms of the projective degrees.

We first consider the case $\ff=0$.
Since $\Res$ is a homogeneous polynomial of degree $nd^{n-1}$, \[
\mu(0) = n d^{n-1}.
\] On the other hand, if $\ff=0$, then $X=\PP^{n-1}$ and all the schemes $Y_\ell$ are empty so $y_\ell=0$ for every $\ell=0,\dots,n-1$.
Thus \[
\sum_{\ell=0}^{n-1} d^{n-\ell-1}(d^{\ell}-y_\ell) = \sum_{\ell=0}^{n-1} d^{n-\ell-1} d^{\ell} = \sum_{\ell=0}^{n-1}d^{n-1} = nd^{n-1}.
\] This proves \cref{thm:main-resultant} when $\ff=0$.
Henceforth, we assume $\ff\neq 0$.

Let $\hh\in R_d^n$ be a system with $\Res(\hh)\neq 0$, chosen generically so that all of the conditions used below hold.
It defines a regular map \[
\phi_{\hh}:\PP^{n-1}\rightarrow\PP^{n-1},\quad [p]\mapsto [h_1(p):\cdots : h_n(p)].
\]
We define the \emph{coincidence scheme} of $\phi_{\ff}$ and $\phi_{\hh}$ as \[
\mathcal{C}(\ff,\hh) \coloneqq \{[p]\in U \mid \phi_{\ff}([p])=\phi_{\hh}[p]\}, \qquad C(\ff,\hh)\coloneqq\deg \mathcal{C}(\ff,\hh).
\]

\begin{lemma}
\label{lem:coincidence-pts}
    For a general $\hh$ as above, the intersection \[
    \Gamma_{\ff}\cap \Gamma_{\hh}
    \] is contained in $G_{\ff}$, is transverse, and is therefore zero-dimensional and reduced.
    It is isomorphic to $\mathcal{C}(\ff,\hh)$ by projection onto the first coordinate.
    Moreover, \[
    C(\ff,\hh) = \sum_{\ell=0}^{n-1} d^{n-\ell-1} y_\ell.
    \]
\end{lemma}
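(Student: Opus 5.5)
Since $\Res(\hh)\neq 0$, the map $\phi_{\hh}$ is a morphism, so $\Gamma_{\hh}=G_{\hh}$ is the graph of a regular map, isomorphic to $\PP^{n-1}$. Applying \cref{lem:projective-degrees} to $\hh$, with $X_{\hh}=\varnothing$ and \cref{lem:regular-degrees}, gives
\[
[\Gamma_{\hh}]=\sum_{j=0}^{n-1} d^{j}[H_1]^{j}[H_2]^{n-1-j}.
\]
The plan is to compute $C(\ff,\hh)$ as the Chow-ring product $[\Gamma_{\ff}][\Gamma_{\hh}]$, using \cref{thm:transversal-intersection} once generic transversality is established.

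\textbf{Step 1: the intersection avoids the boundary.} First I will show that for general $\hh$ the intersection misses $\Gamma_{\ff}\setminus G_{\ff}$. I would use a dimension count on an incidence variety. Let $B\coloneqq\Gamma_{\ff}\setminus G_{\ff}$, which has dimension at most $n-2$ by \cref{lem:projective-degrees}. For each fixed $([p],[q])\in B$, the condition $\phi_{\hh}([p])=[q]$ imposes $n-1$ independent conditions on $\hh$: the linear map $\hh\mapsto \hh(p)\in\CC^n$ is surjective, and the condition says $\hh(p)$ lies on the line spanned by $q$. So the incidence $\{(b,\hh): b\in B,\ b\in\Gamma_{\hh}\}$ has dimension at most $(n-2)+\dim R_d^n-(n-1)<\dim R_d^n$. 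Its projection to the $\hh$-space is therefore not dominant, and a general $\hh$ avoids $B$.

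\textbf{Step 2: transversality on $G_{\ff}$.} Next I will argue transversality at points of $G_{\ff}$. Over $U$, the scheme $\Gamma_{\ff}\cap\Gamma_{\hh}$ is the locus where $\ff(p)$ and $\hh(p)$ are proportional, i.e.\ where the $2\times 2$ minors of $(\ff(p),\hh(p))$ vanish. On $U$ this is the zero locus of a section of $\phi_{\ff}^*T_{\PP^{n-1}}$, and it corresponds to $\mathcal{C}(\ff,\hh)$ under the first projection. The cleanest route is a Bertini/Kleiman-type argument. The group $\mathrm{GL}_n$ acts transitively on the second factor, and replacing $\hh$ by $g\hh$ translates $\Gamma_{\hh}$ by $(1,g)$. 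Since $\{g\hh\}$ is dense among general systems, I can apply Kleiman's transversality theorem to the smooth variety $G_{\ff}\cong U$ and the translates of $\Gamma_{\hh}$ under $1\times\mathrm{GL}_n$. This group does not act transitively on $\PP^{n-1}\times\PP^{n-1}$, so I would apply Kleiman instead to the morphism $U\to\PP^{n-1}\times\PP^{n-1}\to$ (second factor) combined with the map $\PP^{n-1}\to\PP^{n-1}$ given by $[p]\mapsto$ coincidence. An alternative is a direct Bertini argument: the linear system $\hh\mapsto$ minors is base-point free on $U$ because $\hh(p)$ ranges over all of $\CC^n$. This gives a zero-dimensional reduced scheme for general $\hh$. \textbf{I expect making this transversality rigorous to be the main obstacle}; I would first try the base-point-free linear-system form of Bertini for zero loci of sections of the globally generated bundle $\phi_{\ff}^*T_{\PP^{n-1}}(d)$ on $U$.

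\textbf{Step 3: the Chow-ring computation.} Given Steps 1 and 2, \cref{thm:transversal-intersection} yields
\[
C(\ff,\hh)=\deg\,[\Gamma_{\ff}][\Gamma_{\hh}].
\]
Multiplying \eqref{eq:class-formula} by $[\Gamma_{\hh}]$ in $\ZZ[\alpha,\beta]/(\alpha^n,\beta^n)$, the only surviving products are those with $\ell+j=n-1$, each contributing $\alpha^{n-1}\beta^{n-1}$. This gives
\[
C(\ff,\hh)=\sum_{\ell=0}^{n-1} y_\ell\, d^{n-1-\ell}.
\]
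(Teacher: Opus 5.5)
Your proposal follows the paper's proof: Step 1 is the same incidence dimension count, Step 3 is the same Chow-ring product, and your Bertini route for Step 2 is what the paper does explicitly. The paper shows that $\Psi\colon U\times\mathcal{H}\to\PP^{n-1}\times\PP^{n-1}$, $([p],\hh)\mapsto(\phi_{\ff}([p]),\phi_{\hh}([p]))$, is transverse to the diagonal because $\hh\mapsto\hh(p)$ is surjective, and then applies generic smoothness to $\Psi^{-1}(\Delta)\to\mathcal{H}$.

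A few repairs are needed. Drop the Kleiman detour: its density claim is false, since a $\mathrm{GL}_n$-orbit has dimension at most $n^2<\dim R_d^n$ when $d\ge 2$. The bundle you want is $\phi_{\ff}^*T_{\PP^{n-1}}$ itself, namely the cokernel of $\mathcal{O}_U\xrightarrow{\ff}\mathcal{O}_U(d)^n$, with no twist. Finally, before invoking \cref{thm:transversal-intersection}, add the short step that reducedness of $\mathcal{C}(\ff,\hh)\cong G_{\ff}\cap\Gamma_{\hh}$ forces $T_xG_{\ff}\cap T_x\Gamma_{\hh}=0$, so the two tangent spaces span by a dimension count.
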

\begin{proof}
Let \[
\partial \Gamma_{\ff}\coloneqq \Gamma_{\ff}\setminus G_{\ff}
\] denote the boundary of $\Gamma_{\ff}$.
By \cref{lem:projective-degrees} we have \[
\dim \partial\Gamma_{\ff} \leq n-2.
\]

Let \[
\mathcal{H}\coloneqq \{\hh\in R_d^n \mid \Res(\hh)\neq 0\} = R_d^n\setminus \cR,
\] and consider the incidence variety \[
\cI \coloneqq \{ ([p],[q],\hh)\in \partial \Gamma_{\ff}\times \mathcal{H} \mid \phi_{\hh}([p])=[q] \}.
\] For fixed $([p],[q])$, the equation $\phi_{\hh}([p])=[q]$ imposes $n-1$ independent linear conditions, so \[
\dim \cI = \dim \partial \Gamma_f + \dim \mathcal{H} - (n-1) \leq \dim R_d^n - 1.
\] Hence, for generic $\hh$, we have \[
\partial \Gamma_{\ff}\cap \Gamma_{\hh}=\varnothing 
\] which implies that \[ \Gamma_{\ff}\cap \Gamma_{\hh} = G_{\ff}\cap \Gamma_{\hh}.
\]

We now prove that $\mathcal{C}(\ff,\hh)$ is zero-dimensional and reduced.
Consider the map \[
\Psi : U\times \mathcal{H} \rightarrow \PP^{n-1}\times\PP^{n-1},\quad \Psi([p],\hh) = \left(\phi_{\ff}([p]), \phi_{\hh}([p])\right).
\]
Let \[
Y \coloneqq \Psi^{-1}(\Delta)
\] where $\Delta\subset\PP^{n-1}\times\PP^{n-1}$ is the diagonal.
Then, the fiber of $Y\rightarrow \mathcal{H}$ over $\hh\in \mathcal{H}$ is precisely the coincidence scheme $\mathcal{C}(\ff,\hh)$.

We claim that $\Psi$ is transverse to $\Delta$.
Fix $([p],\hh)\in Y$ and write \[
[q]\coloneqq \phi_{\ff}([p])=\phi_{\hh}([p]).
\]
For fixed $p$, the evaluation map \[
R_d^n \rightarrow \CC^n,\quad \delta \hh\mapsto \delta\hh(p)
\] is surjective. 
Moreover, the differential of the quotient map $\CC^n\setminus\{0\}\rightarrow \PP^{n-1}$ at $\hh(p)$ is the surjection \[
\CC^n \rightarrow \CC^{n}/\CC \hh(p) \cong T_{[q]}\PP^{n-1}.
\] 
It follows that, by varying $\hh$ in the open set $\mathcal{H}$, the point $[q]$ can move infinitesimally in every direction in $T_{[q]}\PP^{n-1}$.
In other words, the image of the differential of $\Psi$ at $([p],\hh)$ contains \[
\{0\}\oplus T_{[q]}\PP^{n-1}\subset \operatorname{im}\left(\mathrm{d} \Psi_{([p],\hh)}\right).
\]

On the other hand, the tangent space to $\Delta$ at $([q],[q])$ is \[
T_{([q],[q])} \Delta = \{(v,v)\mid v\in T_{[q]}\PP^{n-1}\}.
\] Hence, we have \[
T_{([q],[q])} \Delta + \left(\{0\}\oplus T_{[q]}\PP^{n-1} \right) = T_{[q]}\PP^{n-1}\oplus T_{[q]}\PP^{n-1}.
\]
This proves that $\Psi$ is transverse to $\Delta$.

It follows that $Y=\Psi^{-1}(\Delta)$ is smooth of codimension $n-1$ in $U\times \mathcal{H}$.
Since $\dim U=n-1$, \[
\dim Y = \dim \mathcal{H}.
\] Now, consider the projection \[
\pi: Y\rightarrow \mathcal{H}.
\] By generic smoothness, the generic fiber $\pi^{-1}(\hh)$ is smooth of dimension $\dim Y-\dim \mathcal{H}= 0$.
Hence, $\mathcal{C}(\ff,\hh)$ is zero-dimensional and reduced.

We now prove that $G_{\ff}$ and $\Gamma_{\hh}$ intersect transversely.
Let $([p],[q])\in G_{\ff}\cap \Gamma_{\hh}$.
Since $[p]\in U$, both graphs are smooth at $([p],[q])$, with tangent spaces \[
T_{([p],[q])} G_{\ff} = \{(v,\mathrm{d} \phi_{\ff}\mid_{[p]}(v) \mid v\in T_{[p]}\PP^{n-1}\},
\] and \[
T_{([p],[q])} \Gamma_{\hh} = \{(v,\mathrm{d} \phi_{\hh}\mid_{[p]}(v) \mid v\in T_{[p]}\PP^{n-1}\}.
\] The tangent space of the coincidence scheme $\mathcal{C}(\ff,\hh)$ at $[p]$ is \[
\ker ( \mathrm{d} \phi_{\ff}\mid_{[p]} - \mathrm{d} \phi_{\hh}\mid_{[p]} ).
\] Since $\mathcal{C}(\ff,\hh)$ is smooth and zero-dimensional, this kernel is zero.
Hence, \[
\mathrm{d} \phi_{\ff}\mid_{[p]} - \mathrm{d} \phi_{\hh}\mid_{[p]} : T_{[p]}\PP^{n-1}\rightarrow T_{[p]}\PP^{n-1}
\] is an isomorphism.
This proves that \[
T_{([p],[q])} G_{\ff} + T_{([p],[q])} \Gamma_{\hh} = T_{([p],[q])}\left( \PP^{n-1}\times\PP^{n-1}\right).
\] Thus, $G_{\ff}$ and $\Gamma_{\hh}$ intersect transversely.
As $\Gamma_{\ff}\cap\Gamma_{\hh}=G_{\ff}\cap \Gamma_{\hh}$, this shows, for generic $\hh$, that $\Gamma_{\ff}$ and $\Gamma_{\hh}$ intersect transversely at precisely $C(\ff,\hh)$ points.

Finally, we compute $C(\ff,\hh)$.
We have already proved \[
C(\ff,\hh) = \deg\left(\Gamma_{\ff}\cap\Gamma_{\hh}\right) =\#(\Gamma_{\ff}\cap\Gamma_{\hh}).
\] 
Since $\Res(\hh)\neq 0$, the projective degrees of $\phi_{\hh}$ are $d^\ell, \ell=0,\dots,n-1$ by \cref{lem:regular-degrees}.
Then, \cref{lem:projective-degrees} yields \begin{equation}
\label{eq:classes}
[\Gamma_{\ff}] = \sum_{\ell=0}^{n-1} y_\ell \, [H_1]^{\ell} \, [H_2]^{n-\ell-1}, \qquad [\Gamma_{\hh}] = \sum_{\ell=0}^{n-1} d^\ell \, [H_1]^{\ell} \, [H_2]^{n-\ell-1}.
\end{equation}

Since $\Gamma_{\ff}$ and $\Gamma_{\hh}$ intersect transversely, \cref{thm:transversal-intersection} gives $[\Gamma_{\ff}\cap\Gamma_{\hh}]=[\Gamma_{\ff}][\Gamma_{\hh}]$.   
   Then, by \cref{eq:classes} and \cref{lem:chow-ring} we have \[
    [\Gamma_{\ff}\cap \Gamma_{\hh}] = \sum_{\ell=0}^{n-1} d^{n-\ell-1}\, y_\ell [H_1]^{n-1} [H_2]^{n-1}.
    \]
    Thus, \[
    C(\ff,\hh)= \deg\left(\Gamma_{\ff}\cap \Gamma_{\hh}\right) = \sum_{\ell=0}^{n-1} d^{n-\ell-1} y_\ell,
    \] which finishes the proof.
\end{proof}

We now compute the multiplicity of $\Res$ at $\ff$ as the intersection multiplicity of the resultant hypersurface with a general line through $\ff$ (\cref{lem:generic-line}).
To this end, consider the projective line of systems \[
L \coloneqq \{[\ff - s \hh] \mid s\in \CC\}\cup \{[\hh]\} \subset \PP(R_d^n)
\] through $[\ff]$ and $[\hh]$.

\begin{lemma}
\label{lem:count}
We have \[
\mu(\ff) = n d^{n-1} - C(\ff,\hh).
\]
\end{lemma}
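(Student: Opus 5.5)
We restrict $\Res$ to the line $L$ and count its roots. Consider the univariate polynomial
\[
p(s)\coloneqq \Res(\ff - s\hh)\in\CC[s].
\]
Since $\Res$ is homogeneous of degree $nd^{n-1}$, we have
\[
p(s)=(-s)^{nd^{n-1}}\Res(\hh - s^{-1}\ff).
\]
The leading coefficient is therefore $\pm\Res(\hh)\neq 0$, so $\deg p = nd^{n-1}$. Hence
\[
nd^{n-1} = \operatorname{ord}_{s=0}p(s) + \sum_{s_0\neq 0}\operatorname{ord}_{s=s_0}p(s).
\]
By \cref{lem:generic-line}, applied with $q=-\hh$ for generic $\hh$, we get $\operatorname{ord}_{s=0}p(s)=\mu(\ff)$. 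Note that $\mathcal{H}$ is a dense open set, so the genericity conditions can be imposed together with those of \cref{lem:coincidence-pts}. It therefore suffices to show
\[
\sum_{s_0\neq 0}\operatorname{ord}_{s_0}p = C(\ff,\hh).
\]

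Next we identify the nonzero roots with coincidence points. If $s_0\neq 0$ and $\Res(\ff-s_0\hh)=0$, there is $[p]$ with $\ff(p)=s_0\hh(p)$. Since $\Res(\hh)\ne0$ we have $\hh(p)\neq 0$, hence $\ff(p)\neq 0$. So $[p]\in U$ and $\phi_{\ff}([p])=\phi_{\hh}([p])$. Conversely, each coincidence point $[p]\in\mathcal{C}(\ff,\hh)$ determines a unique scalar $s_0\neq 0$ with $\ff(p)=s_0\hh(p)$. This gives a map $\mathcal{C}(\ff,\hh)\to\{\text{nonzero roots}\}$.

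It remains to show that, for generic $\hh$, the nonzero roots are simple and the map is bijective, so that the count is exactly $C(\ff,\hh)=\#\mathcal{C}(\ff,\hh)$ (reduced by \cref{lem:coincidence-pts}). For this we use the smooth-point criterion recalled in the introduction: $\cR$ is smooth at a system with a unique simple zero. We then need two facts.
\begin{enumerate}
\item[(a)] For generic $\hh$, distinct coincidence points give distinct values $s_0$. Equivalently, no $\ff-s_0\hh$ with $s_0\neq0$ has two zeros.
\item[(b)] Each zero $[p]$ of $\ff - s_0\hh$ is simple, and $L$ meets $\cR$ transversally at $\ff-s_0\hh$.
\end{enumerate}

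For (b), use the incidence variety $\cI$. The line is transverse to $\cR$ at a smooth point iff its direction $\hh$ is not in $T\cR$. At a system $\boldsymbol{g}$ with simple zero $[p]$, the tangent space $T_{\boldsymbol{g}}\cR$ is the hyperplane $\{\delta : \delta(p)\in \operatorname{im} d\boldsymbol{g}_p\}$. The condition $\hh(p)\notin\operatorname{im} d(\ff-s_0\hh)_p$ should be equivalent to invertibility of $d\phi_{\ff}-d\phi_{\hh}$ at $[p]$. That invertibility is exactly the transversality established in \cref{lem:coincidence-pts}. The link comes from the local computation: writing $\ff=s_0\hh+\boldsymbol{g}$, the coincidence equations are $\boldsymbol{g}(p)\in\CC\hh(p)$. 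Nondegeneracy of this system at $p$ is equivalent to $d\boldsymbol{g}_p$ together with $\hh(p)$ spanning, modulo the Euler direction. That in turn is equivalent both to $[p]$ being a simple zero of $\boldsymbol{g}$ and to $\hh(p)\notin\operatorname{im}d\boldsymbol{g}_p$.

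For (a), a dimension count suffices. Consider pairs $[p]\neq[p']$ in $U$ with $\ff(p)=s\hh(p)$ and $\ff(p')=s\hh(p')$ for a common $s$. For fixed $p,p',s$ these impose $2n$ independent linear conditions on $\hh$ (evaluation at two distinct points is surjective for $d\ge1$). The parameters contribute only $2(n-1)+1=2n-1$ dimensions, so the bad locus has codimension $\geq 1$ in $R_d^n$.

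The main obstacle is the local identification in (b): multiplicity one of $p(s)$ at $s_0$ must coincide with reducedness of the coincidence point. I would handle it with an explicit affine chart computation at $p$ using Euler's relation, rather than relying only on the abstract smooth-point criterion.

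Combining the three steps gives
\[
nd^{n-1}=\mu(\ff)+C(\ff,\hh).
\]
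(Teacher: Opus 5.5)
Your proposal is correct and follows the paper's proof. You restrict $\Res$ to the line $\ff-s\hh$ and split the degree $nd^{n-1}$ into the order $\mu(\ff)$ at $s=0$ (by \cref{lem:generic-line}) plus the remaining roots, which you match one-to-one with the reduced coincidence points of \cref{lem:coincidence-pts}. Your steps (a) and (b) add justification the paper omits: it simply asserts that for generic $\hh$ the other intersections are transverse and occur at systems with a unique simple zero, while you prove this via a dimension count and a tangent-space check that reducedness of a coincidence point means $[p]$ is a simple zero of $\boldsymbol{g}$ with $\hh(p)\notin\operatorname{im}d\boldsymbol{g}_p$.
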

\begin{proof}
Since $L$ is a general line through $[\ff]$, its local intersection multiplicity with the resultant hypersurface at $[\ff]$ is precisely $\mu(\ff)$, see \cref{lem:generic-line}.

Let us denote by $\cR_{\PP}\subset\PP(R_d^n)$ the projectivization of the resultant hypersurface $\cR$.
After choosing $\hh$ generically, all the other intersections of $L$ with $\cR_{\PP}$ are transverse and belong to the open, dense subset of $\cR_{\PP}$ that consists of systems with a unique, simple projective zero.
Since the total degree of the resultant is \[
\deg(\Res) = n d^{n-1},
\] 
we have \[
nd^{n-1} = \mu(\ff) + \# \left((L\cap \cR_{\PP})\setminus \{[\ff]\}\right),
\] where the intersections counted in the second summand are simple.

Since $\Res(\hh)\neq 0$, the point $[\hh]$ does not belong to the resultant hypersurface, and, therefore, does not contribute to the intersection of $L$ and $\cR_{\PP}$.
Every other point of $L\setminus\{[\ff],[\hh]\}$ is of the form $[\ff-s\hh]$ for some unique, non-zero $s\in\CC^{\times}$.
For such $s$, the system $[\ff-s\hh]$ lies in $\cR_{\PP}$ if and only if \begin{equation}
    \label{eq:sol}
 (\ff - s \hh)(p)=0
\end{equation} for some $[p]\in\PP^{n-1}$. 
Since $\hh(p)\neq 0$, this implies $\ff(p)\neq 0$ and \cref{eq:sol} gives \[
\phi_{\ff}([p]) = \phi_{\hh}([p]). 
\]
Conversely, every solution of this equality determines a unique $s\in\CC^{\times}$ such that $(\ff-s\hh)(p)=0$, since we assumed that the intersections of $L$ and $\cR_{\PP}$ occur at systems with a unique, simple zero. 

For general $\hh$, these coincidence points are simple by \cref{lem:coincidence-pts}.
Hence,  \[
 \#\left((L\cap \cR_{\PP})\setminus \{[\ff]\}\right) = C(\ff,\hh).
\]
\end{proof}

We can now prove the main theorem. 

\resultant*
\begin{proof}
    By \cref{lem:coincidence-pts,lem:count}, \[
    \mu(\ff) = nd^{n-1}-\sum_{\ell=0}^{n-1} d^{n-\ell-1}y_\ell.
    \] Since \[
    nd^{n-1} = \sum_{\ell=0}^{n-1} d^{n-\ell-1} d^{\ell},
    \] we conclude that \[
    \mu(\ff) = \sum_{\ell=0}^{n-1} d^{n-\ell-1} (d^\ell - y_\ell).
    \] 
\end{proof}

\subsection{Geometric lower bound}

We now prove \cref{cor:main-resultant}.

\lowerbound*
\begin{proof}
If $\ff=0$, then $X=\PP^{n-1}$.
In this case we have \[
\mu(\ff)=\deg(\Res)=n d^{n-1} = (\dim(X)+1)d^{\dim X},
\] and the assertion holds with equality.
Hence, we will assume that $\ff\neq 0$.

    For $1\leq r\leq n-1$, define \[
    z_r \coloneqq d y_{r-1} - y_r.
    \] 
    We give a formula for $\mu(\ff)$ in terms of the $z_r$.
    Set $w_r\coloneqq d^r-y_r$ for $r=0,\dots,n-1$.
    Then, \[
    \begin{split}
    w_r &= d^{r}-y_r\\ &= d^r- (d y_{r-1}-z_r)\\
    &= d(d^{r-1}-y_{r-1})+z_r\\
    &= d w_{r-1} + z_r.
    \end{split}
    \] 
    By iterating, we obtain \begin{equation}
    \label{eq:y-z-sum}
    w_r = z_r + d z_{r-1} + \dots + d^{r-1} z_1.
    \end{equation}

    Now, by \cref{thm:main-resultant} and \cref{eq:y-z-sum}, \[
    \begin{split}
        \mu(\ff) &= \sum_{\ell=0}^{n-1} d^{n-\ell-1} \left(d^{\ell}-y_\ell\right)\\
        &=\sum_{\ell=0}^{n-1}d^{n-\ell-1}\sum_{r=1}^{\ell} d^{\ell-r}z_r\\
        &=\sum_{r=1}^{n-1} (n-r) d^{n-r-1} z_r.
    \end{split}
    \]
     To conclude, it is enough to prove that \begin{equation}
        \label{eq:claim}
        z_r \geq \sum_{\substack{k\\ \codim X_k=r}} \mult_{X_k}(X)\,\deg (X_k).
    \end{equation}
    Indeed, if \cref{eq:claim} holds, then we have $(n-r)d^{n-r-1}=(\dim(X_k)+1)d^{\dim(X_k)}$ for a codimension-$r$ component $X_k$, so \cref{eq:claim} yields \[
    \mu(\ff) = \sum_{r=1}^{n-1} (n-r) d^{n-r-1} z_r \geq \sum_{k=1}^c \mult_{X_k}(X)\deg(X_k)(\dim(X_k)+1) d^{\dim(X_k)}.
    \] 

    We now prove \Cref{eq:claim}.
    The interpretation of $z_r$ is that when we pass from $Y_{r-1}$ to $Y_{r}$, we intersect $Y_{r-1}$ with $g_r$, which has degree~$d$.
    By \cref{lem:master}~(\ref{it:master1}) and the B\'{e}zout theorem, the total degree of this intersection is $d y_{r-1}$.
    Some components of this intersection lie in $X$; and the other components constitute $Y_{r}$, and contribute $y_r$ to the degree of the intersection.
    Hence, $z_r$ is the total degree of the codimension-$r$ components of $Y_{r-1}\cap\cZ_{\PP}(g_r)$ that lie in $X$.
    Some of these components are the components of $X$.
   Hence, we have \[
    z_r \geq \sum_{\substack{k\\ \codim X_k=r}} \lambda_k \deg(X_k),\qquad \text{where }\;  \lambda_k \coloneqq \mult_{X_k}(Y_{r-1}\cap \cZ_{\PP}(g_r)). 
    \] 
    By \cref{lem:master}~(\ref{it:master3}), we have $\lambda_k\geq \mult_{X_k}(X)$ so \[
    z_r \geq \sum_{\substack{k\\ \codim X_k=r}} \mult_{X_k}(X) \deg(X_k).
    \]
    This proves \cref{eq:claim} and completes the proof.
\end{proof}

\section{Applications to tensor eigenvalues}
\label{sec:eigenvalues}

In this section we prove \cref{thm:main-tensor,cor:span-mult}.
The first result follows by a straightforward application of \cref{cor:main-resultant} to the eigenscheme $E_{\T,\ft}(\lambda)$.

\eigenvalue*
\begin{proof}
If $\lambda$ is not an eigenvalue, the assertion is trivial, so we assume that $\lambda$ is a $\ft$-eigenvalue and $\dim E_{\T,\ft}(\lambda)\geq 1$. 

Let $\ff$ and $\hh$ be the polynomial systems defined by $\T$ and $\ft$, respectively.
We first note that \[
\am_{\T,\ft}(\lambda) = \mult_{s=\lambda} \Res(\T-s\ft) = \operatorname{ord}\left( \Res(\T-\lambda\ft - s\ft)\right) = \operatorname{ord}\left( \Res(\ff-\lambda\hh - s\hh)\right),
\] where $\operatorname{ord}(\cdot)$ denotes the order of vanishing with respect to the variable $s$.
By \cref{lem:generic-line}, we have \[
\am_{\T,\ft}(\lambda)=\operatorname{ord}(\Res(\ff-\lambda\hh-s\hh))\geq\mult_{\ff-\lambda\hh}(\Res) = \mu(\ff-\lambda\hh).
\] Now, \cref{cor:main-resultant} yields \[
\mu(\ff-\lambda \hh) \geq \sum_{k=1}^c \mult_{X_k}(X) \, \deg(X_k) \, \left(\dim(X_k)+1\right)d^{\dim(X_k)},
\] where $X$ is the projective zero scheme of $\ff-\lambda \hh$.
Since $E_{\T,\ft}(\lambda)$ is the affine cone over $X$, we have \[
\mult_{X_k}(X) = \mult_{E_k}(E_{\T,\ft}(\lambda)),\quad \deg(X_k)=\deg(E_k),\quad \dim(X_k)=\dim(E_k)-1.
\] 
Hence, \[
\am_{\T,\ft}(\lambda) \geq \sum_{k=1}^c \mult_{E_k}(E_{\T,\ft}(\lambda))\, \deg(E_k)\, \dim(E_k)\, d^{\dim(E_k)-1}. 
\]
\end{proof}

We will use the following estimate of the dimension of the linear span of an irreducible variety in the proof of \cref{cor:span-mult}.

\begin{lemma}
\label{lem:deg-dim-ineq}
Let $E\subset\CC^n$ be an irreducible affine cone, and $V\coloneqq \operatorname{span}_{\CC}(E)$.
    Then, \[
    \dim(V)\leq \dim(E) + \deg(E) - 1.
    \]
\end{lemma}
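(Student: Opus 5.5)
The plan is to reduce to the projective statement and apply the Eisenbud--Harris inequality \cite{Eisenbud-Harris-87} inside the linear span. If $E=\{0\}$ (or $E$ is empty), the inequality is immediate from the conventions, so we assume $\dim(E)\geq 1$. Let $Z\coloneqq\PP(E)\subset\PP^{n-1}$ be the projectivization of the cone. Then $Z$ is an irreducible projective variety, which we take with its reduced structure. Its invariants are
\[
\dim(Z)=\dim(E)-1,\qquad \deg(Z)=\deg(E),
\]
and its linear span is $\PP(V)$, a projective space of dimension $\dim(V)-1$.

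Next we regard $Z$ as a subvariety of $\PP(V)\cong\PP^{\dim(V)-1}$. By construction $Z$ is not contained in any hyperplane of $\PP(V)$. Indeed, such a hyperplane would be $\PP(W)$ for a proper subspace $W\subsetneq V$ containing $E$, contradicting $V=\operatorname{span}_{\CC}(E)$. So $Z$ is nondegenerate in $\PP(V)$.

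The degree is unchanged by this change of ambient space. A general linear subspace of $\PP(V)$ of complementary dimension is cut out by general hyperplanes of $\PP^{n-1}$ restricted to $\PP(V)$, and it meets $Z$ in $\deg(Z)$ points. Applying the Eisenbud--Harris inequality in $\PP(V)$ then gives
\[
\deg(Z)\geq \codim_{\PP(V)}(Z)+1=(\dim(V)-1-\dim(Z))+1=\dim(V)-\dim(E)+1 .
\]
Rearranging yields $\dim(V)\leq\dim(E)+\deg(E)-1$.

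The only step needing care is the identification of affine-cone invariants with projective ones, together with the invariance of degree under passing to the span. Both are standard, so I expect no real obstacle. The substantive input is the Eisenbud--Harris bound, which we use as a black box.
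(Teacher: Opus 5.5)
Your proposal is correct and follows essentially the same route as the paper: projectivize $E$, observe that $\PP(E)$ is nondegenerate in $\PP(V)$, and apply the Eisenbud--Harris bound $\codim_{\PP(V)}\PP(E)\leq\deg\PP(E)-1$. Your extra remarks on the trivial case $E=\{0\}$ and on the degree being unchanged when passing to $\PP(V)$ fill in details the paper leaves implicit.
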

\begin{proof}
Let $X\coloneqq \PP(E)\subset\PP(V)$.
 Since $V=\operatorname{span}_{\CC}(E)$, $X$ is not contained in any hyperplane in $\PP(V)$.
    By \cite[Proposition~0]{Eisenbud-Harris-87}, we have $\codim_{\PP(V)}(X)\leq \deg(X)-1$ which implies \[
   \dim(V) = \dim(X) + \codim_{\PP(V)}(X) + 1 \leq \dim(X) + \deg(X) = \dim(E) + \deg(E) -1.
    \]
\end{proof}

We can now prove \cref{cor:span-mult}.

\spanmult*
\begin{proof}
    If $\lambda$ is not a $\ft$-eigenvalue of $\T$, then $\am_{\T,\ft}(\lambda)=\sm_{\T,\ft}(\lambda)=0$.
    Hence, we will assume that $\lambda$ is an eigenvalue.
    
    Let $E_1,\dots,E_c$ be the irreducible components of $E_{\T,\ft}(\lambda)$, and define $V\coloneqq \operatorname{span}_{\CC}(E_{\T,\ft}(\lambda)_{\red})$ and $V_k\coloneqq\operatorname{span}_{\CC}(E_k)$.
    Since $E_{\T,\ft}(\lambda)_{\red}=\bigcup_{k=1}^c E_k$, we have $V = V_1 + V_2 + \dots + V_c$.
    Thus, \[
    \sm_{\T,\ft}(\lambda) = \dim(V) \leq \sum_{k=1}^c \dim(V_k).
    \]

    Now, \cref{lem:deg-dim-ineq} gives \[
    \dim(V_k)\leq \dim(E_k) + \deg(E_k)-1\leq \dim(E_k)\deg(E_k)
    \] where the second inequality holds because $\deg(E_k)\geq 1$.
    Summing over all components we get 
\[
    \sm_{\T,\ft}(\lambda) \leq \sum_{k=1}^c \dim(E_k)\deg(E_k).
\]

    By \cref{thm:main-tensor}, we have \[
    \am_{\T,\ft}(\lambda) \geq \sum_{k=1}^c \mult_{E_k}(E_{\T,\ft}(\lambda))\,\deg(E_k)\, \dim(E_k)\,  d^{\dim E_k-1}.
    \]
    Since $E_1,\dots,E_c$ are the irreducible components of $E_{\T,\ft}(\lambda)$, we have \[
    \mult_{E_{k}}(E_{\T,\ft}(\lambda))\geq 1, \qquad \dim(E_k)\geq 1, \qquad d^{\dim (E_k) -1}\geq 1.
    \]
    Thus, \[
    \am_{\T,\ft}(\lambda) \geq \sum_{k=1}^c \dim(E_k)\deg(E_k) \geq \sm_{\T,\ft}(\lambda).
    \] 
\end{proof}

\section*{Acknowledgements}

L. D. is supported by the European Union (ERC Grant SYMOPTIC, 101040907) and by the Deutsche Forschungsgemeinschaft (DFG, German Research Foundation, 556164098).
E. T. is  partially supported by the PGMO grant SOAP, 
the PHC PROCOPE project ``Quantum  games and polynomial optimization'',
and ANR PRC ZADyG (ANR-25-CE48-7058). 

\bibliographystyle{amsalpha}
\bibliography{references}

\end{document}